\documentclass[a4paper,11pt]{article}
\usepackage[utf8]{inputenc}

\usepackage{amsmath,amssymb,amsthm}

\usepackage{url}
\usepackage{hyperref}
\usepackage{cite}

\usepackage{color}

\newtheorem{theorem}{Theorem}[section]
\newtheorem{lemma}[theorem]{Lemma}
\newtheorem{definition}[theorem]{Definition}
\newtheorem{assumption}[theorem]{Assumption}
\newtheorem{prop}[theorem]{Proposition}
\newtheorem{remark}[theorem]{Remark}

\renewcommand{\epsilon}{\varepsilon}
\newcommand{\eps}{\varepsilon}
\newcommand{\Res}{\operatorname{Res}}

\usepackage[a4paper]{geometry}

\setlength {\marginparwidth }{2cm} 
\usepackage{todonotes}

%%%%%%%%%%%%%%%%% Jonas abbreviations

\newcommand{\Qcal} {{\mathcal Q}}

\newcommand{\Z}{\mathbb{Z}}
\newcommand{\R}{\mathbb{R}}

\renewcommand{\P}{\mathbb{P}}
\newcommand{\E}{\mathbb{E}}

\numberwithin{equation}{section}
\AtBeginDocument{}

\begin{document}

\title{Singular limits for stochastic equations\footnotemark[1]}
\author{Dirk Bl\"omker\footnotemark[2]~~and~Jonas M. T\"olle\footnotemark[3]}

\maketitle

\footnotetext[1]{This work is licensed under the Creative Commons Attribution 4.0 International License. To view a copy of this license, visit \url{http://creativecommons.org/licenses/by/4.0/} or send a letter to Creative Commons, PO Box 1866, Mountain View, CA 94042, USA. \includegraphics[height=1em]{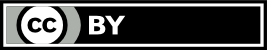}
Original work published in Stochastics and Dynamics, (2023), \url{https://doi.org/10.1142/S0219493723500405}.}

\footnotetext[2]{Universit\"at Augsburg, Institut f\"ur Mathematik, Universit\"atsstra\ss{}e 14, 86135 Augsburg, Germany, \url{dirk.bloemker@math.uni-augsburg.de}.
DB was partially funded by the Deutsche Forschungsgemeinschaft (DFG, German
Research Foundation) -- 514726621.}

\footnotetext[3]{Aalto University, Department of Mathematics and Systems Analysis, PO Box 11100 (Otakaari 1, Espoo), 00076 Aalto, Finland, \url{jonas.tolle@aalto.fi}.\\
JMT acknowledges support by the Academy of Finland and the European Research Council (ERC) under the European Union's Horizon 2020 research and innovation programme (grant agreements no. 741487 and no. 818437).}

\begin{abstract}
We study singular limits of stochastic evolution equations in the interplay of disappearing strength of the noise and insufficient regularity, where the equation in the limit with noise would not be defined due to lack of regularity. 

We recover previously known results on vanishing small noise with increasing roughness, but our main focus is to study for fixed noise  the singular limit where the leading order differential operator in the equation may vanish. Although the noise is disappearing in the limit, additional deterministic terms appear due to renormalization effects.
We separate the analysis of the equation from the convergence of stochastic terms and give a general framework for the main error estimates. This first reduces the result to bounds on a residual and in a second step to various bounds on the stochastic convolution.

Moreover, as examples 
we apply our result to the a singularly regularized Allen-Cahn equation with a vanishing Bilaplacian, and the Cahn-Hilliard/Allen-Cahn homotopy with space-time white noise in two spatial dimensions.
\end{abstract}

{\small
  {{\textbf{Keywords:} Stochastic singular limit; stochastic partial differential equation; stochastic Allen-Cahn equation; stochastic Cahn-Hilliard/Allen-Cahn homotopy; space-time white noise; renormalization.}}
}

 {\small	
  {{\textbf{2020 Mathematics Subject Classification:} 35K91; 60F05; 60H15; 60H17.}}
}

\section{Introduction}

Various singular limits 
for stochastic evolution equations appear in many different examples as in the sharp interface limit for Allen-Cahn (AC) or Cahn-Hilliard (CH) with small noise \cite{ABK:12, Funaki:16} or in the approximation and existence theory of singular stochastic partial differential equations (singular SPDEs) \cite{Ha:14b, GIP:15, DPD03}.
Other variants of stochastic singular limits are considered for instance in the study of stochastic slow-fast dynamical systems \cite{BG:06}, two-scale stochastic optimal control problems \cite{GT:22} and stochastic modulation equations \cite{BH:04}.
In the present work, we are especially interested in the case where the deterministic drift has vanishing parts that lead to a lack of regularity in the limit.  

The abstract setting which we will study is an SPDE of the type 
\begin{equation}
 \label{e:abs}
 \partial_t u_\eps = A_\eps  u_\eps + F_\eps(u_\eps) +  Q_\eps \partial_t  W
\end{equation}
for some linear operator $A_\eps$ and a nonlinearity $F_\eps$.
The noise term is always given by an additive Gaussian space-time white noise consisting of the derivative 
of a standard cylindrical Wiener process $W$ and the noise coefficient $Q_\eps$, which is a linear operator.
The covariance operator $Q_\epsilon^{\ast} Q_\eps$
encodes all the information about correlations and regularity of the noise.

We will study \eqref{e:abs} in the limit $\eps\to0$,
where we assume a vanishing noise strength $Q_\eps\to 0$, but we also allow for a possibly
increasing spatial roughness of the noise,
in combination with  the convergence of the linear operators 
$A_\eps\to A$ and of the nonlinearities $F_\eps\to F$ in a certain sense, where leading order differential operators may vanish in the limit.

The main aim of the paper is to find 
interesting asymptotics of the noise coefficient $Q_\eps$ 
such that the limit $u$ of the $u_\eps$   unexpectedly satisfies
\begin{equation*}
 \partial_t u =  A  u +G(u)
\end{equation*}
with $G\not=F$. This is typically the case when the noise strength vanishes in the limit, but the noise is to rough to make sense of the limiting equation. We comment below on existing results that treat exactly this situation, but all of the existing cases in the literature treat a fixed PDE, i.e., $A_\eps$ and $F_\eps$ are independent of $\eps$. In our result we want to allow for fixed roughness of the noise (for example the covariance operator $Q_\eps$ is just a scalar multiple of the identity) and vanishing  regularization by higher order operators disappearing in the limit.  

In the existence theory of semilinear SPDEs, the convergence of approximations by solutions to regularized equations needs to utilize probabilistic cancellations in order to overcome the pathwise irregularity generated by the Gaussian noise. This leads to a renormalization procedure that generates deterministic counterterms for the limit equation as a new requirement for well-posedness of the stochastic equation. This has been systematically studied in \cite{Ha:14a,BCCH:21}.

In order to manage the convergence of the stochastic remainder terms in our approach, we shall in our examples employ a straightforward Fourier series computation  which is well-known in the case of periodic boundary conditions \cite {GIP:15} and the idea of which can be originally credited to Da Prato and Debussche \cite{DPD02,DPD03}, see Section \ref{sec:stochastic-convolution} for details. Our nonlinear estimates will nevertheless hold in a more general framework, but there we only reduce the key error estimate to various bounds on the stochastic convolution, which still have to be verified for the given domain and the boundary conditions. 

We will provide an abstract setting and a fairly general result  and apply this to three prototypical examples.
Our first example mainly recovers the well known result of  
Hairer, Ryser and Weber \cite{HRW:12}, where they studied the vanishing noise limit for Allen-Cahn equations in two spatial dimensions with regularized noise. This is not possible without using renormalized nonlinearities with diverging constants.
Although this application is not new, we include it to illustrate our result, where this would correspond to the case where both $A_\eps$ and $F_\eps$ are independent of $\eps$, and only the noise changes with $\eps$.

In contrast to that our two main examples treat the case of vanishing higher order differential operators where the noise has a fixed roughness. 
First for the Allen-Cahn equation with space-time white noise we study the regularization of $A_\eps$ 
with an additional Bilaplacian that vanishes in the limit. A main difficulty in our approach is that we had to study error estimates in spaces with $\epsilon$-dependent norms, which is not necessary in our first example from  \cite{HRW:12}.

In our third example we will study an
Allen-Cahn/Cahn-Hilliard homotopy, where again the regularity of the noise is fixed, but both $A_\eps$ and the nonlinearity $F_\eps$ have vanishing higher order differential operators, which leads to additional difficulties.

Let us point out that similar results in the same spirit 
of \cite{HRW:12} with vanishing noise of increasing roughness were obtained in the literature, but they do not fit into our framework. 
First \cite{OOR:20} treats a result for the stochastic nonlinear wave equation. Moreover Flandoli, Galeati, and Luo \cite{FGL:21-1, FGL:21-2} studied the effect of transport noise on the Navier-Stokes and Euler equation, where in the limit of vanishing strength  due to increasing roughness of the noise one obtains in the limit a deterministic Navier-Stokes equation with a changed viscosity that depends on the noise. 

Let us mention that vanishing noise limits occur also when studying the large deviations principle and small noise asymptotics for semilinear SPDEs, see e.g. \cite{FJL:82, Zabczyk:89, DPZ:88, HR:31} and the references therein. An interesting result is \cite{HW:15}, where the authors study the large deviation limit in combination with the loss of regularity limit. An asymptotic coupling theorem for the stochastic Allen-Cahn equation with small space-time white noise in two dimensions was studied in \cite{TW:20}.

The dependence of solutions to SPDEs on the convergence of coefficients or parameters in the drift and covariance terms has e.g. been studied in \cite{KvN:11, BCF:88, GT:16, BMSS:95}.

Let us finally remark that regularity structures were studied for equations with Neumann boundary conditions on a square in \cite{GH:19, HP:21} or  
on Riemannian manifolds without boundary in \cite{BB:16}, but the case of general domains 
seems to be open. Nevertheless, we will formulate our results also for the general domain case, as we can reduce all nonlinear estimates to simple bounds on the stochastic convolution, which still need to be verified. We shall only briefly comment on the periodic case in the end.

\subsection*{Organization of the paper}

In Section \ref{sec:results} we shall present our main results divided into the cases of our prototypical examples which are introduced as well. In Section \ref{sec:abstract-approach} we formulate and prove our abstract main Theorem \ref{thm:main} together with its assumptions.
Here the error estimate is carried over to bounds on the residual. In Section \ref{sec:nonlinear} the assumptions are verified for our examples. 
The bound on the residual terms for each example is proved in Section \ref{sec:residual}. Here we carry over all error estimates to bounds on the stochastic convolution only. Finally, our main approximation result is completed by examples for the estimates needed for the stochastic convolution in Section \ref{sec:stochastic-convolution}.
Here we briefly comment on the results in the periodic case, which are direct applications of well known results via Fourier series expansion.

\section{Results and examples}\label{sec:results}

We are interested in the following 
examples which will be covered by our abstract setting. For simplicity of presentation we study a cubic nonlinearity, and differential operators that are diagonal in Fourier space, although our abstract result would allow for a far more general setting.

Moreover, the limiting equations contain only a Laplacian $\Delta$ as a leading order term, which is not enough to give a meaning to the equation perturbed by space-time white noise $\partial_t W$ in two spatial dimensions or more.

\subsection{Cahn-Hilliard/Allen-Cahn homotopy in 2D}

A typical example is the stochastic 
Cahn-Hilliard/Allen-Cahn (CH/AC) equation 
for $\epsilon \in(0,1)$ in the limit $\epsilon\to 0$ 
\begin{equation}
 \label{e:main}
\partial_t u_\eps = ( 1-\epsilon - \epsilon \Delta )(\Delta u_\eps + f(u_\eps))  + \sigma_\eps \partial_t W
\end{equation}
for a standard cubic nonlinearity $f(u)=u-u^3$ with space-time white noise $\partial_t W$
in a two dimensional domain subject to periodic boundary conditions on the flat torus or Neumann boundary conditions on general domains.
In a similar way, we could also consider the equation in three spatial dimensions, but abstain from doing so here. 

From the general theory of SPDEs, one is expecting 
that in terms of existence and regularity of 
solutions this equation for $\eps\in(0,1)$ 
behaves similarly to the standard stochastic Cahn-Hilliard equation 
(case $\eps=1$) --- even with space-time white noise. 
This equation has unique global solutions for initial conditions in $L^2$. 
This can be proved in the same way as for the Cahn-Hilliard equation, treated by Da Prato and Debussche \cite{DPD96}, who were applying the spectral Galerkin method.
In the case of multiplicative noise
the existence and uniqueness of solutions was studied in \cite{AKM:16}.

The main difficulty is that for the convergence $\sigma_\eps\to \sigma>0$ for
$\eps\to0$ one would obtain the stochastic Allen-Cahn equation in the limit, 
which is no longer well-posed in dimension $d\geq2$.
One would need to add diverging renormalization constants to give sense to this case. See for example \cite{HW:15, HRW:12} or \cite{B:19} among others.

For the main result, we try to determine $\sigma_\eps$ in such a way that
with high probability 
\[u_\eps \approx u+Z_\eps\]
where $ Z_\eps$ is the stochastic convolution, i.e.\ the solution to \eqref{e:main} with $f=0$, see \eqref{def:OU} for the definition, and $u$ solves the deterministic PDE
\[\partial_t u = \Delta u+f(u)-3C_0 u,
\] 
where the constant $C_0$ depends on the convergence of sequence $\sigma_\varepsilon$ and is defined in \eqref{def:C0}. In fact, it is the limit in quadratic mean of $Z_\eps^2$ in $H^{-1}$, see \eqref{eq:Zsquare}.

\begin{remark}
In our work we mainly focus on the nonlinear estimates and establish the full approximation result for periodic boundary conditions only. For Neumann boundary conditions, our main results reduce the approximation result to a statement about the stochastic convolution. To evaluate this in full generality seems to be an open problem on general domains.
\end{remark}

Note that for $\sigma_\eps$ too small, 
we are just in a large deviation type regime,
where $C_0=0$. On the other hand, if $\sigma_\eps$ is too large, we are in the renormalization regime, where $C_0$ has to be replaced by an $\eps$-dependent constant that diverges for $\eps\to0$.

\begin{remark}
Let us remark that we do neither need regularity structures, nor paracontrolled distributions for our result. 
Our limit is a deterministic PDE and thus we can assume more regularity of the limit $u$. Therefore, mixed terms like $u Z_\eps^2$ are always well-defined in the space where $Z_\eps^2$ is well-defined.
\end{remark}

\subsection{Allen-Cahn with higher order regularization}

Another example in a similar spirit as the Allen-Cahn equation with regularized noise
is the singular limit with noise subject to the higher order regularization
\begin{equation}
\label{e:regAC}
\partial_t u_\eps =  -\eps^2\Delta^2u_\eps+ \Delta u_\eps + f(u_\eps) +  \sigma_\eps \partial_t W,
\end{equation}
either with periodic or Neumann boundary conditions.
Again, the higher order differential equation is regularizing 
and, in the limit $\eps\to 0$, the equation might not be well-defined if $\sigma_\eps$ converges to a positive constant or is just not vanishing fast enough.

The main results in this case are analogous to the CH/AC-homotopy, but simpler, as we can use less conditions on the stochastic convolution to bound the residual in the main error estimate. 

\subsection{Allen-Cahn with regularized noise}

Hairer, Ryser and Weber \cite{HRW:12} studied AC in two spatial dimensions with regularized noise
subject to periodic boundary conditions, i.e. 
\begin{equation}\label{eq:HRW}
\partial_t u_\eps =  \Delta u_\eps + f(u_\eps) +  \sigma_\eps \Qcal_\eps \partial_t W,
\end{equation}
where the operator $Q_\eps= \sigma_\eps \Qcal_\eps$ is split explicitly into the scalar noise strength $\sigma_\eps$ and spatial correlation given by the operator $\Qcal_\eps$.
Here the equation is fixed, 
but due to the regularization $\Qcal_\eps\to I$ for constant $\sigma_\eps = \sigma$  the limit $\eps\to 0$ 
is not possible without using renormalized nonlinearities with diverging constants.

The authors also identified three regimes. For fixed (or too large) noise strength the solution $u_\eps$ would converge to zero.
For too small noise strength the noise would just vanish, and in the intermediate regime, one obtains a non-trivial limit.

An interesting result is \cite{HW:15} where the authors study the limits of noise strength to zero and loss of regularity of the noise both separately and combined. 
Our method partially
recovers their result for the joint limit when the limiting equation is a deterministic PDE. 

\subsection{Viscous Cahn-Hilliard equation}

A model related to the Cahn-Hilliard/Allen-Cahn homotopy
is the viscous Cahn-Hilliard (CH) equation introduced by Novick-Cohen
\cite{NC:88},
that can be transformed into 
\[
\partial_t u_\eps = - ( 1- \alpha - \alpha \Delta)^{-1} \Delta (\epsilon^2\Delta u_\eps + f(u_\eps))
+ \sigma_\eps \partial_t W.
\]
Here the case $\alpha=1$ corresponds to the AC equation,
while $\alpha=0$ is the CH equation.

But this example does not fit into our setting, 
as with space-time white noise 
only the case $ \alpha=0$ is well-posed in spatial dimension $d=2$ and $d=3$,
while for $\alpha\in (0,1]$, we would need renormalization as the linear operator 
does not generate enough regularity of the solution.

\section{Abstract approach}\label{sec:abstract-approach}

Recall our abstract equation \eqref{e:abs} posed in a separable Hilbert space $H$.
The norm of $H$ is denoted by $\|\cdot\|$.
In the abstract result, we will not go into the detail 
of establishing the existence of solutions, 
but comment in our examples on it in more detail.
For a general approach to well-posedness of SPDEs see \cite{DP-Z:14, LR:15, DPZ:88}.

\begin{assumption}
\label{ass:space}
Suppose that we have for all $\eps>0$ a Gelfand-triple 
together with an additional Banach space $X$ such that
\[
V_\eps \subset X \subset H \simeq H'\subset V_\eps'
\]
for a separable and reflexive Banach space $V_\eps$ with 
topological dual $V_\eps'$ such that all embeddings are continuous and dense.
\end{assumption}

Let us define the stochastic convolution  
\begin{equation}
\label{def:OU}
Z_\eps(t) = \int_0^t e^{(t-s)A_\eps } Q_\eps  \,dW(s)
\end{equation}
which is in general defined in terms of a 
$C_0$-semigroup $e^{tA_\eps}$ generated by $A_\epsilon$. 
Here we first suppose that $Z_\eps$ is a 
well-defined stochastic process in $X$, more precisely, assume:
\begin{assumption}
\label{ass:Z}
The process 
$Z_\eps$ belongs $\P$-a.s. to $C^0([0,T];X)$ for every $\eps>0$ and $T>0$.
\end{assumption}
We will later add additional assumptions on $Z_\eps$.
In general, we need  $Z_\eps$ to take values in a more regular
space than $H$, as $F_\eps$ is in our examples in general not defined 
on $H$ but on a smaller space $X$. See Assumption \ref{ass:AF} below.
 
Using the standard transformation,
we define
\[
v_\eps := u_\eps - Z_\eps\;,
\]
in order to obtain 
\begin{equation}
 \label{e:trafo}
 \partial_t v_\eps =  A_\eps v_\eps + F_\eps(v_\eps+Z_\eps).
\end{equation}

\begin{remark} 
As we need
at least some regularity of $u$ and $v_\eps$ for the singular limit, 
even at time zero, 
we do not treat the substitution with a stationary stochastic convolution, 
where the integral in $Z_\eps$ starts at $-\infty$, 
which would be more natural in this context. 
\end{remark}

We want to show that if $Z_\eps\to 0 $ and $Z_\eps^2\to C_0$ in a certain sense (in fact, with values in $X$ both limits will diverge) 
then we can still
have averaging/renormalization effects in $F(v_\eps+Z_\eps)$ that appear in the limit and lead to additional terms.

Thus we consider for some nonlinearity $G$ the limiting PDE
\begin{equation}
\label{e:limit}
 \partial_t u =  A  u+ G(u). 
\end{equation}

\begin{definition}\label{def:weak}
We say that $u$ is a weak solution to \eqref{e:limit} with initial datum $u(0)=u_0\in H$ in a Gelfand triple
\[V \subset X \subset H \simeq H'\subset V'\]
with dense and continuous embeddings for measurable operators $G:X\to V'$ and $A:V\to V'$ if $u\in L^2([0,T],V)$ such that for all $t\in [0,T]$
\[
\langle u(t), v \rangle_H= 
\langle u_0,v\rangle_H
+\int_0^t \langle Au(s),v\rangle\,ds
+\int_0^t \langle G(u(s)),v\rangle\,ds
\]
for every $v\in V$, and all integrals are well defined.
\end{definition}
Note that angle brackets $\langle\cdot,\cdot\rangle$ in the previous definition 
denote the dual pairing between $V$ and $V'$ induced by the identification of $H$ with its dual $H'$.

\begin{definition}
We say that $u_\eps$ solves \eqref{e:abs} if $v_\eps=u_\eps - Z_\eps$ is a weak solution to \eqref{e:trafo}, where $Z_\eps$ denotes the stochastic convolution from Assumption \ref{ass:Z}, and where we define weak solutions to \eqref{e:trafo} as in Definition \ref{def:weak}, where $V$ is replaced by $V_\eps$, $A$ is replaced by $A_\eps$, and $G$ is replaced by $F_\eps$.\end{definition}

\begin{assumption}\label{ass:ex}
Let $G:X\to V'$, $F_\eps:X\to V_\eps'$, $A:V\to V'$, and $A_\eps:V_\eps\to V_\eps'$ be measurable operators.
We assume that weak solutions to \eqref{e:limit} exist in a Gelfand triple
\[V \subset X \subset H \simeq H'\subset V'\]
with dense and continuous embeddings such that
\begin{equation}\label{eq:ureg}
u\in L^2([0,T],V) \cap C^0([0,T],X),\quad \text{and}\quad\partial_t u\in L^2([0,T],V').
\end{equation}
Moreover, we assume that  weak solutions to \eqref{e:trafo} exist in the Gelfand triple
\[V_\eps \subset X \subset H \simeq H'\subset V_\eps'\]
such that
\begin{equation}\label{eq:vepsreg}
v_\eps \in L^2([0,T],V_\eps) \cap C^0([0,T],X),\quad\text{and}\quad \partial_t v_\eps\in L^2([0,T],V_\eps').
\end{equation}
Suppose moreover that the error $\varphi_\eps=u-v_\eps$ is well-defined with
\begin{equation}\label{eq:varphireg}
\varphi_\eps\in L^2([0,T],V_\eps),\quad \text{and}\quad\partial_t \varphi_\eps\in L^2([0,T],V_\eps').
\end{equation}
\end{assumption}

\begin{remark}
In order to ensure the existence of solutions, 
we of course need additional assumptions on the nonlinearities $F_\eps$ and $G$.
We comment below in all our examples that it is usually straightforward to ensure that Assumption \ref{ass:ex} is true.
Moreover, it is usually easy to verify in examples that the solution $u$ of \eqref{e:limit} becomes more regular than in \eqref{eq:ureg} provided the initial condition $u(0)=u_0$ is more regular than merely $u_0\in H$.
\end{remark}

Note that the assumption \eqref{eq:varphireg} implies $\varphi_\eps\in C^0([0,T],H)$ and is moreover
sufficient for all the energy estimates,
as in this case $\|\varphi_\eps\|^2 \in W^{1,1}([0,T])$ 
with $\partial_t \|\varphi_\eps\|^2=2\langle \partial_t \varphi_\eps,\varphi_\eps\rangle$, see e.g. \cite[Proposition III.1.2]{Show}.

The assumption \eqref{eq:varphireg} requires usually higher regularity of $u$ as just \eqref{eq:ureg}. In our examples, we have a situation that $V_\eps\subset V_{\tilde{\eps}}\subset V$, for any $\eps>\tilde{\eps}>0$. We may assume as much regularity for the initial datum $u_0$ of \eqref{e:limit} as needed such that $u\in L^2([0,T],V_\eps)$ and $\partial_t u\in L^2[0,T],V'_\eps)$ for every $\eps>0$. We shall not assume any abstract relation of $V_\eps$ and $V$ as the spaces are usually canonically attached to $A_\eps$ and $A$ respectively.

Our approach is based on residual estimates and an approximation result, 
where there are basically two possible ways.
One needs to plug the solution to one of the equations for the residual estimate 
in the other equation, which yields an approximation result.

We shall calculate the residual of $u$, if plugged into (\ref{e:trafo}).
Define
\begin{eqnarray}
\Res_\eps(u)(t) \label{def:resu}
& :=& \partial_t u(t)-A_\eps u(t) -F_\eps(u(t)+ Z_\eps(t)) \\
& =& (A-A_\eps) u(t) + F(u(t)+ Z_\eps(t)) - F_\eps(u(t)+ Z_\eps(t)) 
\nonumber\\&& +G(u(t))-F(u(t)+ Z_\eps(t))
\nonumber
\end{eqnarray}
Note that in the abstract setting, in general, we do not know 
whether the residual is defined and sufficiently regular.
This has to be checked in applications, when the bound on the residual necessary for the main theorem have to be checked. Concerning our examples, we shall verify this in Section
\ref{sec:residual}.

The advantage of the latter rewriting is that the term $F(u(t)+ Z_\eps(t))$, 
where the renormalization terms will appear, has a fixed nonlinearity $F$,
which simplifies the argument significantly.

The aim is now to choose spaces $V_\epsilon \subset H$ 
such that we can bound the residual in the dual space $V'_\epsilon$. We will always use the convention that the  $V'_\epsilon$-norm of the residual is infinite if it is not well-defined in that space.

Note that in some examples the spaces $V_\eps$ and thus $V_\eps'$ are independent of $\epsilon>0$, 
but for instance in the Cahn-Hilliard/Allen-Cahn homotopy, the $\eps$-dependence is natural.

The crucial assumption that hides all the
technicalities in the estimates for the 
nonlinear terms  is:

\begin{assumption}
 \label{ass:AF}
 Consider the spaces $ X $ and $V_\eps$ from Assumption \ref{ass:space}
 and suppose $F_\eps:X\to V_\eps'$ and $A_\eps : V_\eps\to V_\eps'$.
Assume that there exist constants $C\ge 0$, $\delta>0$, and $0\leq c_\eps \to 0$ for $\eps\to0$,
such that:
\begin{equation}\label{eq:main_estimate}
  \langle A_\eps \varphi,\varphi\rangle 
  + \langle F_\eps(\varphi+\psi) 
  - F_\eps(\psi), \varphi\rangle
  \leq 
  - \delta\|\varphi\|_{V_\eps}^2 + C\|\varphi\|^2
\end{equation}
for every choice of $\varphi\in V_\eps$ and $\psi\in X$ 
with $c_\eps \|\psi\|_X \leq 1$.
\end{assumption}

We are ready to state and prove our main theorem.

\begin{theorem}
\label{thm:main}
Under Assumptions \ref{ass:space}, \ref{ass:ex}, and  \ref{ass:AF} 
let $u$, $u_\eps$ be any two solutions to \eqref{e:limit}, \eqref{e:abs}, respectively, from Assumption \ref{ass:ex}
and define by $Z_\eps$ the stochastic convolution from Assumption \ref{ass:Z}.

Then,
for  all $T>0$, there is a constant $K>0$ such that  
for $\epsilon>0$ sufficiently small, we have that 
for all $\gamma>0$ 
\begin{eqnarray*}
\mathbb{P} \left( \sup_{[0,T]}\|u_\eps-u-Z_\eps\|^2 >  K \gamma\right)
&\leq &
\mathbb{P} \left(\sup_{[0,T]}  \|Z_\eps \|_X > (2c_\eps)^{-1} \right)+\mathbb{P} \left(\|u(0)-u_\eps(0)\|^2 > \gamma\right)\\
&&
+\mathbb{P} \left(\int_0^T\| \Res_\eps(u)\|^2_{V_\eps'}\,dt > \gamma\right)
\end{eqnarray*}
\end{theorem}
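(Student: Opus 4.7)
The plan is to work with the error $\varphi_\eps := u - v_\eps$, which by the defining substitution $v_\eps = u_\eps - Z_\eps$ and the convention $Z_\eps(0) = 0$ from \eqref{def:OU} satisfies $\varphi_\eps = -(u_\eps - u - Z_\eps)$ and $\varphi_\eps(0) = u(0) - u_\eps(0)$. Subtracting \eqref{e:trafo} from the trivial rewriting $\partial_t u = A_\eps u + F_\eps(u + Z_\eps) + \Res_\eps(u)$ of \eqref{e:limit}, which is immediate from \eqref{def:resu}, yields the error equation
\[
\partial_t \varphi_\eps = A_\eps \varphi_\eps + \bigl[F_\eps(u + Z_\eps) - F_\eps(v_\eps + Z_\eps)\bigr] + \Res_\eps(u),
\]
interpreted weakly in $V_\eps'$ thanks to the regularity declared in \eqref{eq:ureg}, \eqref{eq:vepsreg}, \eqref{eq:varphireg}.

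Next, I would test the error equation against $\varphi_\eps$, using the standard identity $\partial_t \|\varphi_\eps\|^2 = 2\langle\partial_t \varphi_\eps, \varphi_\eps\rangle$ justified by \eqref{eq:varphireg} via \cite[Proposition III.1.2]{Show}. The crucial step is to rewrite the nonlinear difference in the form $F_\eps(\varphi + \psi) - F_\eps(\psi)$ with $\varphi := -\varphi_\eps \in V_\eps$ and $\psi := u + Z_\eps \in X$, since then $\varphi + \psi = v_\eps + Z_\eps$ and $\langle A_\eps(-\varphi_\eps), -\varphi_\eps\rangle = \langle A_\eps \varphi_\eps, \varphi_\eps\rangle$, so Assumption \ref{ass:AF} gives the coercivity bound $-\delta\|\varphi_\eps\|_{V_\eps}^2 + C\|\varphi_\eps\|^2$ for the sum of the linear and nonlinear contributions. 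The hypothesis $c_\eps \|\psi\|_X \leq 1$ is handled by restricting to the good event $\Omega_\eps^T := \{\sup_{[0,T]} \|Z_\eps\|_X \leq (2c_\eps)^{-1}\}$: the bound $u \in C^0([0,T], X)$ from \eqref{eq:ureg} is deterministic, and since $c_\eps \to 0$, for all $\eps$ small enough we have $c_\eps \sup_{[0,T]}\|u\|_X \leq 1/2$, whence $c_\eps \|u(t) + Z_\eps(t)\|_X \leq 1$ uniformly in $t \in [0,T]$ on $\Omega_\eps^T$.

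The residual term is then absorbed via Young's inequality, $\langle \Res_\eps(u), \varphi_\eps\rangle \leq (4\delta)^{-1}\|\Res_\eps(u)\|_{V_\eps'}^2 + \delta\|\varphi_\eps\|_{V_\eps}^2$, which cancels the $V_\eps$-norm against the negative term. On $\Omega_\eps^T$ the resulting differential inequality
\[
\partial_t \|\varphi_\eps\|^2 \leq 2C \|\varphi_\eps\|^2 + \frac{1}{2\delta}\|\Res_\eps(u)\|_{V_\eps'}^2
\]
combined with Gronwall's lemma gives a pathwise deterministic bound $\sup_{[0,T]} \|\varphi_\eps\|^2 \leq K_0 \bigl(\|u(0) - u_\eps(0)\|^2 + \int_0^T \|\Res_\eps(u)\|_{V_\eps'}^2 \,dt\bigr)$ with $K_0 = K_0(T, C, \delta)$. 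Setting $K := 2K_0$ and applying the elementary bound $\mathbb{P}(A + B > 2\gamma) \leq \mathbb{P}(A > \gamma) + \mathbb{P}(B > \gamma)$, together with $\mathbb{P}((\Omega_\eps^T)^c) = \mathbb{P}(\sup_{[0,T]} \|Z_\eps\|_X > (2c_\eps)^{-1})$, produces exactly the three-term bound stated.

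The main obstacle sits in the second paragraph, namely the correct choice of $\psi$ when invoking Assumption \ref{ass:AF}. The naive pick $\psi = v_\eps + Z_\eps = u_\eps$ would require controlling the full solution $u_\eps$ in $X$, which is not available from the hypotheses; the key point is that $\psi := u + Z_\eps$ isolates the deterministic, uniformly bounded $u$ from the random piece $Z_\eps$, which is exactly what the event $\Omega_\eps^T$ quantifies.
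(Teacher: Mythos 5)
Your proposal is correct and follows essentially the same route as the paper's proof: the same error equation for $\varphi_\eps$, the same testing against $\varphi_\eps$ with Assumption \ref{ass:AF} applied to $\psi = u + Z_\eps$, the same splitting of the smallness condition into a deterministic bound on $u$ and the event on $\|Z_\eps\|_X$, and the same Young--Gronwall--union-bound conclusion. The only (immaterial) difference is your sign convention $\varphi_\eps = u - v_\eps$, which you handle correctly by applying the assumption to $-\varphi_\eps$.
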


Note that for $c_\eps>0$ the condition $\epsilon>0$ being sufficiently small
in the previous theorem can be quantified by
$\sup_{[0,T]}  \|u \|_X \leq (2c_\eps)^{-1}$, see \eqref{eq:eps_small}.

In all cases where $c_\eps=0$ we 
will use that $1/c_\eps=\infty$ and no smallness of $\eps$ is needed and furthermore
no condition on the $X$-norm of $Z_\eps$ is needed.

\begin{proof}[Proof of Theorem \ref{thm:main}]
For the approximation result consider the error with $u,v_\eps$ from Assumption \ref{ass:ex} 
\[
\varphi_\eps = v_\eps - u=u_\eps-Z_\eps-u
\]
which (using the residual) solves
\begin{equation}
 \label{e:error}
 \partial_t \varphi_\eps =  A_\eps \varphi_\eps +  F_\eps(\varphi_\eps+u+Z_\eps) - F_\eps(u+Z_\eps)  + \Res_\eps(u)\;.
\end{equation}
Now based on the regularity assumed on $\varphi_\eps$ we use standard a priori estimates based on 
\[ 
\frac12\partial_t \|\varphi_\eps\|^2 
= \langle \partial_t \varphi_\eps,\varphi_\eps\rangle 
\]
to obtain
\begin{equation}
 \label{e:aprio}
\frac12\partial_t \|\varphi_\eps\|^2 = 
\langle A_\eps \varphi_\eps,\varphi_\eps\rangle 
+ \langle F_\eps(\varphi_\eps+u+Z_\eps) 
- F_\eps(u+Z_\eps) , \varphi_\eps \rangle 
+ \langle \Res_\eps(u),\varphi_\eps\rangle .
\end{equation}
 
We now use the crucial assumption \ref{ass:AF} 
that hides all the technical estimates and assume that 
\begin{equation}
\label{e:crucial}
 c_\eps\|u+Z_\eps\|_X \leq 1.
\end{equation}
Note that as $u$ is  independent  of $\epsilon$ we have that for 
small $\epsilon>0$ such that
\begin{equation}\label{eq:eps_small}c_\eps\|u\|_X \leq 1/2\end{equation}
the equation \eqref{e:crucial} is true provided  $c_\eps\|Z_\eps\|_X \leq 1/2$,
which we will assume from now on.
Moreover, recall that $u\in C^0([0,T],X)$ by assumption.
Thus for $\eps>0$ sufficiently small in the sense of \eqref{eq:eps_small},
\[\sup_{[0,T]}\|Z_\eps\|_X\le (2c_\eps)^{-1}\]
implies
\[ \sup_{[0,T]}\|u+Z_\eps\|_X \leq c_\eps^{-1}.\]
Using \eqref{e:crucial} and Assumption \ref{ass:AF} we obtain from \eqref{e:aprio} 
\[
\partial_t\|\varphi_\eps\|^2  \leq  
- 2\delta\|\varphi_\eps\|_{V_\eps}^2 
+ 2 C\|\varphi_\eps\|^2
+ 2\|\varphi_\eps\|_{V_\eps} \|\Res_\eps(u)\|_{V_\eps'} 
\]
Thus by Young inequality 
\[
\partial_t\|\varphi_\eps\|^2  \leq  
- \delta\|\varphi_\eps\|_{V_\eps}^2 
+ 2C\|\varphi_\eps\|^2 
 + \frac1\delta \|\Res_\eps(u)\|^2_{V_\eps'} \;.
\]
By Gronwall type arguments we obtain for $t\in[0,T]$
\[
 \|\varphi_\eps(t)\|^2  \leq 
 \|\varphi_\eps(0)\|^2e^{2Ct} + 
 \frac1\delta  \int_0^t  
  e^{2C (t-s)} \|\Res_\eps(u)\|^2_{V_\eps'} \,ds   \;.
\]
Now we assume that 
\[
 \int_0^T\| \Res_\eps(u)\|^2_{V_\eps'}\, ds  \leq \gamma
 \quad \text{and}\quad 
  \|\varphi_\eps(0)\|^2 \leq \gamma.
\]
Thus we obtain a constant $K>0$ depending only on $\delta$ and $T>0$ such that
\[ \sup_{t\in[0,T]}\|\varphi_\eps(t)\|^2  \leq K\gamma \;.
\]
\end{proof}

The main task for the remaining parts of the paper is to verify Assumption \ref{ass:AF} in the given examples 
and to show that the bound on the residual needed in the previous theorem is actually small. 

The initial conditions will always satisfy $\mathbb{P} \left(\|u(0)-u_\eps(0)\|^2 > \gamma\right)=0$ for all $\gamma>0$ if we assume $u(0)=u_\eps(0)$. However, we will need solutions $u$ of the deterministic PDE \eqref{e:limit} to be more regular than \eqref{eq:ureg}, so one can think of $u(0)$ as a regularized version of $u_\eps(0)$.

\section{The nonlinear estimate}\label{sec:nonlinear}

In this section we check Assumption \ref{ass:AF} in our examples
and determine the spaces $X$, $H$, $V$, and $V_\eps$.

In what follows in this section, let $H=L^2(U)=L^2(U,dx)$ be the standard $L^2$-space, where either $U\subset\R^2$ is a bounded domain with sufficiently smooth boundary
or $U=\mathbb{T}^2=\R^2/(2\pi\Z^2)$.
We will check the assumptions of the abstract setting both in the periodic case and in the Neumann case, but to keep the presentation concise, we discuss the bounds on the stochastic convolution necessary to establish the full result very briefly in the periodic case only.   

We will not discuss the  existence and uniqueness of solutions for the three SPDEs below in full detail. For these type of equations this is fairly standard. 
See for example \cite{DPD96} for the Cahn-Hilliard equation, which could easily be adapted to the CH/AC-homotopy in the next section.
For stochastic Allen-Cahn we refer to the lecture notes \cite{B:19}, among many others, which also adapts to the case of the regularized AC equation.

For the convenience of the reader, we collect the definition of spaces and operators for all of our three examples in the table below.

\bigskip

\begin{tabular}{|c|c|c|c|}
\hline 
Symbol & CH/AC-homotopy & AC higher order reg. & AC reg. noise\tabularnewline
\hline 
\hline 
$V$ & $H^{1}$ & $H^{1}$ & $H^{1}$\tabularnewline
\hline 
$X$ & $C^{0}$ & $C^{0}$ & $C^{0}$\tabularnewline
\hline 
$H$ & $L^{2}$ & $L^{2}$ & $L^{2}$\tabularnewline
\hline 
$A_{\varepsilon}$ & $(1-\varepsilon-\varepsilon\Delta)\Delta$ & $-\varepsilon^{2}\Delta^{2}+\Delta$ & $\Delta$\tabularnewline
\hline 
$V_{\varepsilon}$ & $\left(H^{2},\|(1-A_{\varepsilon}^{1/2})(\cdot)\|_{L^{2}}\right)$ & $\left(H^{2},\|(1-A_{\varepsilon}^{1/2})(\cdot)\|_{L^{2}}\right)$ & $H^{1}$\tabularnewline
\hline 
$A$ & $\Delta$ & $\Delta$ & $\Delta$\tabularnewline
\hline 
$f(u)$ & $u-u^{3}$ & $u-u^{3}$ & $u-u^{3}$\tabularnewline
\hline 
$F_{\varepsilon}(u)$ & $(1-\varepsilon-\varepsilon\Delta)f(u)$ & $u-u^{3}$ & $u-u^{3}$\tabularnewline
\hline 
$F(u)$ & $u-u^{3}$ & $u-u^{3}$ & $u-u^{3}$\tabularnewline
\hline 
$G(u)$ & $u-u^{3}-3C_{0}u$ & $u-u^{3}-3C_{0}u$ & $u-u^{3}-3C_{0}u$\tabularnewline
\hline 
$Q_{\varepsilon}$ & $\sigma_{\varepsilon}$ & $\sigma_{\varepsilon}$ & $\sigma_{\varepsilon}\mathcal{Q}_{\varepsilon}$\tabularnewline
\hline 
\end{tabular}

\subsection{Cahn-Hilliard/Allen-Cahn homotopy in 2D}

In \eqref{e:main}, we have either with Neumann or periodic boundary conditions
\[
A_\eps = ( 1-\epsilon - \epsilon \Delta )\Delta
\]
and
\[
F_\eps(u) = ( 1-\epsilon - \epsilon \Delta) f(u)  
\]
where we use  
the standard cubic $f(u)=u-u^3$ such that via Young inequality
\[
(f(\varphi+\psi)-f(\psi)) \varphi    
= \varphi^2 -\varphi^4-3\varphi^3\psi-3\varphi^2\psi^2
\leq  \varphi^2 - \frac14 \varphi^4.
\]
Moreover, it is easy to verify that for $\eps \in (0,1/2)$,
we have in $H$, being the standard $L^2$-space, that
\[\langle  A_\eps \varphi,\varphi\rangle_{L^2}
= - \eps \|\Delta \varphi\|^2_{L^2} - (1-\eps) \|\nabla\varphi\|^2_{L^2}
\leq - \frac12  \|\nabla\varphi\|^2_{L^2}.
\]
We choose $V_\eps=H^2$ to be the standard Sobolev space
with the $\eps$ dependent norm given by 
\begin{equation}\label{H2-norm} 
\|\varphi\|_{V_\eps}^2 = \|\varphi\|^2_{L^2}-\langle A_\eps \varphi,\varphi\rangle_{L^2} \;.
\end{equation}
For the nonlinear estimate, we obtain by using the standard cubic
\begin{eqnarray*}
\lefteqn{\langle F_\eps(\varphi+\psi)-F_\eps(\psi) , \varphi\rangle_{L^2}} \\ 
&=&   - \eps \langle   f(\varphi+\psi)-f(\psi), \Delta \varphi \rangle_{L^2} 
+ (1-\eps)   \langle   f(\varphi+\psi)-f(\psi), \varphi\rangle_{L^2}\\
&=& \eps \|\nabla\varphi\|^2_{L^2} + (1-\eps) \|\varphi\|^2_{L^2} + 
\eps \int_U [(\varphi+\psi)^3-\psi^3 ] \Delta \varphi \, dx - (1-\eps) \int_U  [(\varphi+\psi)^3-\psi^3 ] \varphi\, dx.
\end{eqnarray*}
For the cubic terms, we use
\[
\eps \int_U \varphi^3 \Delta \varphi \; dx= -3\eps \int_U \varphi^2 |\nabla\varphi|^2 \, dx \leq 0,
\]
and
\[
\eps \int_U \varphi^2 \psi \Delta \varphi \, dx 
\leq \eps \|\varphi\|_{L^4}^2\|\psi\|_\infty\|\Delta \varphi\| 
\leq 4\eps  \|\varphi\|_{L^4}^4\|\psi\|^2_\infty   + \frac18 \eps \|\Delta \varphi\|^2\;. 
\]
Moreover,
\[
\eps \int_U \varphi \psi^2 \Delta \varphi \, dx 
\leq \eps \|\varphi\|_{L^2}\|\psi\|^2_\infty\|\Delta \varphi\| 
\leq 4\eps \|\varphi\|_{L^2}^2\|\psi\|^4_\infty  
+ \frac18 \eps \|\Delta \varphi\|^2\;. 
\]
and finally
\[
- (1-\eps) \int_U  [(\varphi+\psi)^3-\psi^3 ] \varphi\, dx
\leq - \frac14 (1-\eps)\|\varphi\|_{L^4}^4.
\]
Now,
\[
\langle F_\eps(\varphi+\psi)-F_\eps(\psi) , \varphi\rangle_{L^2} 
\leq  
\frac12\eps\|\Delta \varphi\|^2_{L^2} 
- \left(\frac14-C\eps\|\psi\|_\infty^2\right) \|\varphi\|^4_{L^4}
+ C\left(1+\eps\|\psi\|_\infty^4\right)\|\varphi\|^2_{L^2}.
\]
Finally, for $\psi$ with $\|\psi\|_\infty \leq \eps^{-1/4}$ we have 
\[
\langle F_\eps(\varphi+\psi)-F_\eps(\psi) , \varphi\rangle_{L^2} 
\leq  
\frac12\eps\|\Delta \varphi\|^2_{L^2} 
- \left(\frac14-C\eps^{1/2}\right) \|\varphi\|^4_{L^4}
+ 2C\|\varphi\|^2_{L^2}.
\]
Thus for the space $X=C^0=C^0(\overline{U})$ the assumption \ref{ass:AF} 
is satisfied for  $c_\epsilon=\eps^{1/4}$ and small $\epsilon>0$.
Moreover, note that by Sobolev embedding $\varphi\in L^4$ if $\varphi\in H^2$. 

We can now rewrite our main theorem \ref{thm:main} to obtain:

\begin{theorem}
In the setting of the CH/AC-homotopy in 2D,
let $u$, $u_\eps$ be any two solutions to \eqref{e:limit}, \eqref{e:abs}, respectively, from Assumption \ref{ass:ex}
and define by $Z_\eps$ the stochastic convolution from Assumption \ref{ass:Z}.

Then,
for  all $T>0$ there is a constant $K>0$ such that  
for all $\gamma>0$ 
\begin{eqnarray*}
\mathbb{P} \left( \sup_{[0,T]}\|u_\eps-u-Z_\eps\|^2_{L^2} >  K \gamma\right)
&\leq &
\mathbb{P} \left( \eps^{1/4} \sup_{[0,T]}  \| Z_\eps \|_{C^0} > 1/2 \right) 
\\ &&
+\mathbb{P} \left(\|u(0)-u_\eps(0) 
\|^2_{L^2} > \gamma\right)\\
&& +\mathbb{P} \left(\int_0^T\| \Res_\eps(u)\|^2_{V_\eps'}\,dt > \gamma\right).
\end{eqnarray*}
\end{theorem}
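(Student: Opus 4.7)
The plan is to read this theorem as a direct specialization of the abstract Theorem \ref{thm:main}, so essentially no new work is required once the nonlinear estimate is in place. First I would pin down the data required by the abstract theorem: set $H=L^2(U)$, $X=C^0(\overline{U})$, $V_\eps=H^2$ equipped with the $\eps$-dependent norm \eqref{H2-norm}, $V=H^1$, $A_\eps=(1-\eps-\eps\Delta)\Delta$, $F_\eps(u)=(1-\eps-\eps\Delta)f(u)$, $A=\Delta$, and $G(u)=f(u)-3C_0u$, in agreement with the comparison table preceding the subsection.

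Next I would check each of the standing assumptions. Assumption \ref{ass:space} is immediate from the continuous dense Sobolev embeddings $H^2\subset C^0\subset L^2$ in two spatial dimensions. Assumption \ref{ass:Z} (that $Z_\eps\in C^0([0,T],X)$) and Assumption \ref{ass:ex} (existence and regularity of weak solutions) are input hypotheses built into the statement; they are underwritten by Da Prato--Debussche style well-posedness arguments in the spirit of \cite{DPD96} adapted to the homotopy, and by stochastic convolution estimates deferred to Sections \ref{sec:residual}--\ref{sec:stochastic-convolution}. The substantive Assumption \ref{ass:AF} has already been verified in the chain of computations immediately preceding the theorem: for every $\varphi\in V_\eps$ and $\psi\in X$ with $\eps^{1/4}\|\psi\|_{C^0}\le 1$, combining the dissipativity of $A_\eps$ with the pointwise nonlinear estimate for $f(\varphi+\psi)-f(\psi)$, and absorbing the cross cubic terms into fractions of $\|\Delta\varphi\|_{L^2}^2$ and $\|\varphi\|_{L^4}^4$, yields an inequality of the required form \eqref{eq:main_estimate} with $c_\eps=\eps^{1/4}$ and some uniform $\delta>0$.

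With all hypotheses in hand I would invoke Theorem \ref{thm:main} literally. The threshold $(2c_\eps)^{-1}$ becomes $(2\eps^{1/4})^{-1}$, so the event $\sup_{[0,T]}\|Z_\eps\|_X>(2c_\eps)^{-1}$ is exactly $\eps^{1/4}\sup_{[0,T]}\|Z_\eps\|_{C^0}>1/2$, producing the first term on the right-hand side. The smallness condition \eqref{eq:eps_small} reads $\eps^{1/4}\sup_{[0,T]}\|u\|_{C^0}\le 1/2$; since $u$ is independent of $\eps$ and Assumption \ref{ass:ex} yields $u\in C^0([0,T],C^0)$, this holds for all $\eps$ smaller than a threshold depending only on $u$ and $T$. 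The two remaining probability terms are exactly those produced by the abstract theorem, and the constant $K$ depends only on $\delta$ and $T$ as in the abstract proof.

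The main obstacle is not the derivation of this theorem, which is essentially a repackaging of Theorem \ref{thm:main}, but rather what is hidden in the hypotheses and deferred to later sections: controlling $\sup_{[0,T]}\|Z_\eps\|_{C^0}$ with the correct $\eps^{1/4}$ scaling, and above all bounding the residual $\int_0^T\|\Res_\eps(u)\|_{V_\eps'}^2\,dt$, which is where the precise scaling of $\sigma_\eps$ and the compensation by the renormalization counterterm $3C_0u$ must do their work. Those two probabilistic inputs---and not the deterministic analysis carried out here---constitute the real analytic content of the approximation program.
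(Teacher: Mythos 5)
Your proposal is correct and follows exactly the paper's route: the theorem is obtained by verifying Assumption \ref{ass:AF} with $X=C^0$, $V_\eps=H^2$ with the norm \eqref{H2-norm}, and $c_\eps=\eps^{1/4}$ via the cubic estimates preceding the statement, and then invoking Theorem \ref{thm:main} so that the threshold $(2c_\eps)^{-1}$ yields the event $\eps^{1/4}\sup_{[0,T]}\|Z_\eps\|_{C^0}>1/2$. Your remarks on the deferred probabilistic inputs match the paper's own division of labour between Sections \ref{sec:residual} and \ref{sec:stochastic-convolution}.
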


\subsection{Allen-Cahn with higher order regularization in 2D}

In \eqref{e:regAC} we have  either with
Neumann or periodic boundary conditions
\[
A_\eps=-\eps^2\Delta^2+\Delta
\]
and consider it as an unbounded operator on the standard $L^2$ space. Thus we have
\[
\langle A_\eps \varphi,\varphi\rangle 
=
-\eps^2 \|\Delta \varphi\|^2 - \|\nabla\varphi\|^2
\]
and we choose $V_\eps=H^2$ to be the standard Sobolev space 
but with the $\epsilon$-dependent  Norm $(\|\varphi\|^2-\langle A_\eps \varphi,\varphi\rangle)^{1/2}$.

For the nonlinearity we have the standard cubic
\[f(u)=u -u^3
\]
and thus for
\[
F_\eps(u) = f(u)  
\]
we have 
\[
(f(\varphi+\psi)-f(\psi)) \varphi    
\leq  \varphi^2 - \frac14 \varphi^4 .
\]

Hence, Assumption \ref{ass:AF} is satisfied with $c_\eps=0$ for our space $V_\eps$, which also yields that we do not need a condition for the $X$-norm of $Z_\eps$.
We obtain

\begin{theorem}
In the setting of the AC with higher order regularization in 2D,
let $u$, $u_\eps$ be any two solutions to \eqref{e:limit}, \eqref{e:abs}, respectively, from Assumption \ref{ass:ex}
and define by $Z_\eps$ the stochastic convolution from Assumption \ref{ass:Z}.

Then,
for  all $T>0$ there is a constant $K>0$ such that  
for all $\gamma>0$ 
\begin{eqnarray*}
\mathbb{P} \left( \sup_{[0,T]}\|u_\eps-u-Z_\eps\|^2_{L^2} >  K \gamma\right)
&\leq &
\mathbb{P} \left(\|u(0)-u_\eps(0) 
\|^2 > \gamma\right) 
\\&&
+\mathbb{P} \left(\int_0^T\| \Res_\eps(u)\|^2_{V_\eps'}\,dt > \gamma\right)
\end{eqnarray*}
\end{theorem}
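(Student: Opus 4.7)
The plan is to read off the statement as a direct corollary of Theorem \ref{thm:main} once Assumption \ref{ass:AF} is checked for the choices $A_\eps = -\eps^2\Delta^2 + \Delta$, $F_\eps(u) = u - u^3$, $H = L^2$, $X = C^0$, and $V_\eps = H^2$ with the $\eps$-dependent norm $\|\varphi\|_{V_\eps}^2 = \|\varphi\|^2 - \langle A_\eps\varphi,\varphi\rangle = \|\varphi\|^2 + \eps^2\|\Delta\varphi\|^2 + \|\nabla\varphi\|^2$. The crucial point is that this verification will go through with $c_\eps \equiv 0$, at which point the convention $1/c_\eps = \infty$ stated just before the theorem kills the term $\mathbb{P}(\sup_{[0,T]}\|Z_\eps\|_X > (2c_\eps)^{-1})$ in Theorem \ref{thm:main}, leaving exactly the estimate claimed.

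For the nonlinear bound the key is an elementary pointwise inequality for the cubic. Expanding,
\[
(f(\varphi+\psi)-f(\psi))\varphi = \varphi^2 - \varphi^2\bigl(\varphi^2 + 3\varphi\psi + 3\psi^2\bigr).
\]
Completing the square, $\varphi^2 + 3\varphi\psi + 3\psi^2 = \bigl(\varphi+\tfrac32\psi\bigr)^2 + \tfrac34\psi^2 \geq \tfrac14\varphi^2$, where the last inequality is a quadratic-in-$\varphi$ check with discriminant $9\psi^2 - 3\psi^2 \geq 0$ reversed. Hence
\[
(f(\varphi+\psi)-f(\psi))\varphi \leq \varphi^2 - \tfrac14\varphi^4
\]
for all real $\varphi,\psi$ without any smallness on $\psi$. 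Integrating over $U$ and using $\langle A_\eps\varphi,\varphi\rangle = -\eps^2\|\Delta\varphi\|^2 - \|\nabla\varphi\|^2 = \|\varphi\|^2 - \|\varphi\|_{V_\eps}^2$ then gives
\[
\langle A_\eps\varphi,\varphi\rangle + \langle F_\eps(\varphi+\psi)-F_\eps(\psi),\varphi\rangle \leq -\|\varphi\|_{V_\eps}^2 + 2\|\varphi\|^2,
\]
so Assumption \ref{ass:AF} holds with $\delta=1$, $C=2$, and $c_\eps=0$, with no constraint relating $\varphi$ and $\psi$.

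With this in hand, Theorem \ref{thm:main} applies verbatim, producing the three probability terms on the right-hand side with $V_\eps' = (H^2, \|(1-A_\eps^{1/2})\cdot\|_{L^2})'$, and the first term vanishes because the smallness condition on $\eps$ in the abstract theorem is vacuous (one may take $\sup_{[0,T]}\|u\|_X < \infty = (2c_\eps)^{-1}$). The main (and only) work is therefore the algebraic manipulation above; I do not expect any serious obstacle, since the completion-of-squares trick eliminates the need for any $\|\psi\|_{C^0}$-dependent coefficient that made the CH/AC case require $c_\eps = \eps^{1/4}$. The space $X = C^0$ plays no role in this estimate and is retained only to match the abstract framework and to permit the residual bound proved separately in Section \ref{sec:residual}.
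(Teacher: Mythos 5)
Your proposal is correct and follows essentially the same route as the paper: verify Assumption \ref{ass:AF} with $c_\eps=0$ via the pointwise bound $(f(\varphi+\psi)-f(\psi))\varphi\le\varphi^2-\tfrac14\varphi^4$ together with $\langle A_\eps\varphi,\varphi\rangle=-\eps^2\|\Delta\varphi\|^2-\|\nabla\varphi\|^2$, and then invoke Theorem \ref{thm:main}, whose first probability term drops by the convention $1/c_\eps=\infty$. Only your justification of $\varphi^2+3\varphi\psi+3\psi^2\ge\tfrac14\varphi^2$ via the discriminant is garbled (the relevant discriminant is $9\psi^2-4\cdot\tfrac34\cdot3\psi^2=0$, i.e.\ $\tfrac34\varphi^2+3\varphi\psi+3\psi^2=\tfrac34(\varphi+2\psi)^2\ge0$), but the inequality itself is true and the argument stands.
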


It now remains to bound the residual in the $H^{-2}$ space, 
but with the $\epsilon$-dependent norm.

\subsection{Allen-Cahn with regularized noise in 2D}

In \eqref{eq:HRW} we have  either with Neumann or periodic boundary conditions
\[
A_\eps=\Delta
\]
and consider it as an unbounded operator on the standard $L^2$ space. Thus we have
\[\langle A_\eps \varphi,\varphi\rangle = - \|\nabla\varphi\|^2\]
and we choose $V_\eps=H^1$ to be the standard Sobolev space
for all $\eps>0$.

For the nonlinearity we have the standard cubic
\[f(u)=u -u^3
\]
and thus for
\[
F_\eps(u) = f(u)  
\]
we have 
\[
(f(\varphi+\psi)-f(\psi)) \varphi    
\leq  \varphi^2 - \frac14 \varphi^4
\]
Hence, Assumption \ref{ass:AF} is satisfied with $c_\eps=0$ and with $V_\eps=H^1$.

We obtain essentially the same theorem for the regularized AC equation as in the previous section for the higher order regularization.

\section{Convergence of the residual}\label{sec:residual}

In this section, we consider the situation in our examples that the probability of the $L^2([0,T],V_\eps)$-norm of the residual being larger than $\gamma>0$ is small, and bound it by various probabilities that depend only on the stochastic convolution.  
Moreover, we finally fix the $G$ in \eqref{e:limit}.
In all three examples the limiting equation is
\begin{equation}
    \label{e:limitAC}
    \partial_tu=\Delta u + f(u)- 3C_0 u 
\end{equation}
with $G(u)=f(u)-C_0u$, where $C_0$ is defined by the limit of $Z_\eps^2$ for $\eps\to0$.

The limiting equation is a standard Allen-Cahn equation so that both for Neumann or periodic boundary conditions, we obtain a unique solution
\[u\in C^0([0,T],H^1)\cap L^2([0,T],H^2)
\]
for initial conditions $u(0)\in H^1$ via well-known results for nonlinear parabolic PDEs using
for instance spectral Galerkin methods, see e.g. \cite{Sch:2000,T:97}.

Thus we fix a sufficiently smooth solution $u$ and in the following consider the bound on the residual.
Recall the definition \eqref{def:resu} 
\begin{eqnarray*}
\Res_\eps(u)(t)
& =& (A-A_\eps) u(t) + F(u(t)+ Z_\eps(t)) - F_\eps(u(t)+ Z_\eps(t)) 
\nonumber\\&& +G(u(t))-F(u(t)+ Z_\eps(t))
\nonumber
\end{eqnarray*}
which we need by the main theorem to
be bounded in $L^2([0,T],V_\eps')$.

In the case that $A_\eps=A$ and $F_\eps = F$ do not depend on $\eps$, the residual is encoding the information of the renormalization coming from the vanishing noise.

Firstly, we need
\begin{equation}
\label{e:resb1}
\int_0^T\|(A-A_\eps) u\|^2_{V_\eps'}\, dt \to 0
\quad\text{for } \eps\to0. 
\end{equation}
We will see below that this is assured if $u$ is sufficiently regular.

Secondly, we need 
\begin{equation}\label{e:resb2}
\int_0^T\| F(u+Z_\eps) - F_\eps(u+Z_\eps) \|^2_{V_\eps'}\, dt \to 0
\quad\text{for } \eps\to0. 
\end{equation}
This term in \eqref{e:resb2}
is zero for the singular AC,
but for the CH/AC equation more is needed. 
Here,
\[F(u+Z_\eps) - F_\eps(u+Z_\eps) = -\eps(1+\Delta) f(u+Z_\eps) \]
and thus we need bounds on $Z_\eps$ and $u$ 
to get this bounded in $L^2([0,T],V_\eps')$.

Thirdly, consider 
\[
G(u)=f(u) - 3C_0 u
\]
for our simple cubic, to obtain
\[
F(u+Z_\eps) - G(u) =  Z_\eps - 3 u^2 Z_\eps - 3 u [Z_\eps^2-C_0] - Z_\eps^3 .
\]
For these terms, we need various bounds on $Z_\eps$ and averaging results from the next section, together with higher regularity of $u$.

\begin{remark}
Note that the previous statement already focuses on the periodic case, where $C_0$ is a constant. Due to boundary effects, for the case of Neumann boundary conditions on general domains, the situation of $C_0$ not being a constant function in the spatial component may occur, see \cite{GH:19}.
\end{remark}

\subsection{CH/AC-homotopy in 2D}

For \eqref{e:resb1} we have $A=\Delta$ and $A_\eps=(1-\eps-\eps\Delta)\Delta$ and thus 
\[A-A_\eps=\eps(1+\Delta)\Delta
\]
where the norm in $V_\eps$ is 
\[ \|u\|_{V_\eps}^2=
\langle (1+(-\Delta)(1-\eps-\eps\Delta))u,u\rangle.
\]
Suppose that $\{e_k\}$, $k\ge 1$, is an $L^2$-orthonormal basis of eigenfunctions of $-\Delta$ with corresponding eigenvalues $\{\mu_k\}$, $k\ge 1$, then 
for $u=\sum_{k\ge 1} u_k e_k$, we have
\begin{eqnarray*}
\| (A-A_\eps) u \|^2_{V_\eps'} 
&=& \eps^2 \sum_{k\ge 1} \frac{(1-\mu_k)^2\mu_k^2}{(1+\mu_k)(1-\eps+\eps\mu_k)} u_k^2 \\
&\leq& C\eps \sum_{k\ge 1} (1+\mu_k^2) u_k^2 
\\&&
\leq C\eps\|u\|_{H^2}^2.
\end{eqnarray*}
Thus \eqref{e:resb1} holds, provided $u\in L^2([0,T],H^2)$.
We can bound the first term in the residual by $C\eps$.

Note that
\begin{equation}\label{eq:op-bound}
\|1+\Delta\|_{L(L^2,V_\eps')} \leq C\eps^{-1/2},
\end{equation}
which can be seen as follows. For $\eps\in (0,1)$,
\begin{align*}\|1+\Delta\|_{L(L^2,V_\eps')}=&\|(1+\Delta)((1+(-\Delta))(1-\eps-\eps\Delta))^{-1/2}\|_{L(L^2,L^2)}\\
=&\sup_{k\ge 1}\left|\frac{1-\mu_k}{\sqrt{(1+\mu_k)(1-\eps+\eps\mu_k)}}\right|\\
\le&\sup_{k\ge 1}\left|\frac{1-\mu_k}{\sqrt{\eps(1+\mu_k)\mu_k}}\right|\le\sup_{k\ge 1}\left|\frac{\mu_k-1}{\sqrt{\eps}\mu_k}\right|\le \left(1+\frac{1}{\inf_{k\ge 1}\mu_k}\right)\eps^{-1/2}.
\end{align*}
For \eqref{e:resb2}, we rely on \eqref{eq:op-bound}, to bound 
\[
F_\epsilon(w)-F(w) = -\eps(1+\Delta) f(w),
\]
where $\|\cdot\|_{L(L^2,V_\eps')}$ denotes the operator norm.
Thus
\begin{eqnarray*}
\int_0^T\| F(u+Z_\eps) - F_\eps(u+Z_\eps) \|^2_{V_\eps'}\, dt
&\leq& 
C\eps^{1/2}\int_0^T \|f(u+Z_\eps)\|^2_{L^2}dt\\
&\leq & C\eps^{1/2} \left(1+\|u\|^6_{L^6([0,T],L^6)}+\|Z_\eps\|^6_{L^6([0,T],L^6)}\right). 
\end{eqnarray*}
As $u\in L^\infty([0,T],H^1)$, we can bound the first term above by $C\eps^{1/2}$. For the second term, we need a bound on $Z_\eps$ in $L^6([0,T],L^6)$.
 
For the third term 
in the residual we use that 
$\|w\|_{V_\eps'} \leq C \|w\|_{H^{-1}} $ 
to obtain 
\begin{eqnarray*}
\lefteqn{\int_0^T\|F(u+Z_\eps) - G(u) \|^2_{V_\eps'}\, dt}\\
&\leq& C \int_0^T\|F(u+Z_\eps) - G(u) \|^2_{H^{-1}}\, dt \\
&\leq& C \int_0^T\|Z_\eps - 3 u^2 Z_\eps - 3 u [Z_\eps^2-C_0] - Z_\eps^3 \|^2_{H^{-1}}\, dt 
\end{eqnarray*}
Now we need smallness of the bounds on $Z_\eps$ and $Z_\eps^3$ in $ L^2([0,T],H^{-1})$
to control the first and last term small.

For the second and third term, we need to bound products in $H^{-1}$ using for example the following result. 

\begin{lemma}
For $v\in H^{-1}$ and $w\in H^{1\vee(d/2+s)}$, $s>0$ on a $d$-dimensional domain, the pointwise product $v\cdot w$ is defined and satisfies
\[\|vw\|_{H^{-1}} \leq C \|v\|_{H^{-1}}\|w\|_{H^{1\vee (d/2 +s) }}\]
 where the constant depends on $s$.
\end{lemma}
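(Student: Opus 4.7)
The approach is duality. For a test function $\phi\in H^1$, the product $vw$ is to be understood via
\begin{equation*}
\langle vw,\phi\rangle_{H^{-1},H^1} := \langle v, w\phi\rangle_{H^{-1},H^1},
\end{equation*}
so that, taking the supremum over $\|\phi\|_{H^1}\le 1$,
\begin{equation*}
\|vw\|_{H^{-1}} \leq \|v\|_{H^{-1}}\,\|M_w\|_{L(H^1)},
\end{equation*}
where $M_w\colon\phi\mapsto w\phi$ denotes the pointwise multiplication operator. The statement therefore reduces to the multiplier bound $\|M_w\|_{L(H^1)}\leq C\|w\|_{H^{1\vee(d/2+s)}}$, and the whole proof is essentially a verification that $w$ is a bounded multiplier on $H^1$.

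To establish this multiplier bound, I would apply the Leibniz rule $\nabla(w\phi)=w\nabla\phi+\phi\nabla w$, giving
\begin{equation*}
\|w\phi\|_{H^1}\leq \|w\|_{L^\infty}\|\phi\|_{H^1}+\|\phi\nabla w\|_{L^2}.
\end{equation*}
The factor $\|w\|_{L^\infty}$ is controlled by $\|w\|_{H^{1\vee(d/2+s)}}$ via the Sobolev embedding $H^{d/2+s}\hookrightarrow L^\infty$ for $s>0$, and, in the low-dimensional regime $d=1$ with $s\le 1/2$, by the classical embedding $H^1\hookrightarrow L^\infty$. For the remaining term $\|\phi\nabla w\|_{L^2}$ I would use H\"older with exponents $1/p+1/q=1/2$ chosen compatibly with $\phi\in H^1\hookrightarrow L^p$ and $\nabla w\in H^{(d/2+s-1)_+}\hookrightarrow L^q$; the strict inequality $s>0$ ensures that the required Sobolev indices lie strictly below the critical ones, so finite exponents $p,q$ are available. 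In the special case $d=1$ one simply uses $\|\phi\|_{L^\infty}\|\nabla w\|_{L^2}\leq C\|\phi\|_{H^1}\|w\|_{H^1}$.

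The main obstacle is tracking the correct H\"older exponents across the different dimensional regimes, since the Sobolev index $d/2+s$ crosses $1$ near $d=2$, $s=0$, and the roles of the two factors change accordingly. Rather than carrying out this case-by-case bookkeeping, the cleanest route is to invoke a standard pointwise multiplier theorem in Sobolev spaces (see for instance Runst--Sickel or Triebel), namely that for any $\sigma>d/2$ the space $H^\sigma$ acts boundedly by pointwise multiplication on $H^\tau$ for $|\tau|\le\sigma$, with norm controlled by $\|w\|_{H^\sigma}$. Applying this with $\sigma=1\vee(d/2+s)$ and $\tau=1$ immediately yields the desired bound, and then duality concludes the proof. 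For bounded domains $U$ with sufficient boundary regularity, one may reduce to the case of $\R^d$ via a Sobolev extension operator, apply the multiplier theorem there, and restrict back to $U$.
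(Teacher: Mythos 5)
Your proposal is correct, but it is a genuinely different route from the paper's, in the sense that the paper offers no argument at all: it simply cites a general Sobolev multiplication theorem (Behzadan--Holst, \emph{Multiplication in Sobolev spaces, revisited}, Theorem 8.1) and reads off the stated inequality as a special case. You instead supply the standard proof of the negative-smoothness case of such theorems: define $vw$ by duality against $H^1$, reduce to showing that $M_w$ is a bounded multiplier on $H^1$, and verify that bound either by the Leibniz rule together with the Sobolev embeddings $H^{1\vee(d/2+s)}\hookrightarrow L^\infty$ and H\"older with exponents matched to $H^1\hookrightarrow L^p$ and $\nabla w\in H^{(d/2+s)-1}\hookrightarrow L^q$ (your exponent bookkeeping checks out in each dimensional regime, with $s>0$ giving the needed subcriticality), or by quoting the multiplier theorem $H^\sigma\cdot H^\tau\subset H^\tau$ for $\sigma>d/2$, $|\tau|\le\sigma$, noting $\sigma=1\vee(d/2+s)>d/2$ always holds. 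The extension-operator reduction to $\R^d$ handles the bounded-domain case under the paper's smooth-boundary assumption. What your approach buys is a self-contained and transparent argument tailored to the single pair of exponents actually used; what the paper's citation buys is generality (arbitrary exponent triples, rougher domains) at the cost of opacity. One cosmetic remark: it is worth stating explicitly that the duality definition agrees with the pointwise product for $v\in L^2$ and extends by density, which is what justifies calling $v\cdot w$ ``the pointwise product'' as in the lemma.
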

\begin{proof}
See for example \cite[Theorem 8.1]{BH:21}.
\end{proof}

Then, by additionally using the submultiplicativity of the $H^{1+s}$-norm in two spatial dimensions, by interpolation, and by H\"older's and Jensen's inequalities respectively, we derive for suitable $s\in(0,1)$ and $p>2$
\begin{eqnarray*}
\int_0^T \| u^2 Z_\eps \|^2_{H^{-1}}\, dt 
&\leq &  
C \int_0^T \| u\|^4_{H^{1+s}} \|Z_\eps \|^2_{H^{-1}}\, dt \\
&\leq &  
C \int_0^T \| u\|^{4(1-s)}_{H^1}\|u\|^{4s}_{H^2} \|Z_\eps \|^2_{H^{-1}}\, dt \\
&\leq &  
C \| u\|^{4(1-s)}_{L^{\infty}([0,T],H^1)}
\|u\|^{4s}_{L^2([0,T],H^2)}
\|Z_\eps \|^{2}_{L^{p}([0,T],H^{-1})}
\end{eqnarray*}
provided $s$ is sufficiently small and $p$ sufficiently large.

Similarly, we obtain the estimate
\begin{eqnarray*}
\int_0^T\|u[Z_\eps^2-C_0]\|^2_{H^{-1}}\, dt 
&\leq & 
C \| u\|^{2(1-s)}_{L^{\infty}([0,T],H^1)}
\|u\|^{2s}_{L^2([0,T],H^2)}
\|Z_\eps^2-C_0 \|^{2}_{L^{p}([0,T],H^{-1})}
\end{eqnarray*}

We obtain the following theorem.

\begin{theorem} 
\label{thm:ACCH}
In the setting of the  CH/AC-homotopy in 2D, let $u\in C^0([0,T],H^1)\cap L^2([0,T],H^2)$ be a solution to \eqref{e:limitAC} and fix $\epsilon_0>0$ sufficiently small. 
Then for all $T>0$ and sufficiently large $p>2$ there is a constant $\tilde{K}>0$ such that for all $\eps\in(0,\eps_0)$ 
and for all $\gamma \geq \eps^{1/2}$, we have
\begin{eqnarray*}
\lefteqn{\mathbb{P} \left( \sup_{[0,T]}\|u_\eps-u-Z_\eps\|^2_{L_2} >  \tilde{K} \gamma\right)}\\
&\leq &
\mathbb{P} \left(\eps^{1/4}\sup_{[0,T]}  \|Z_\eps \|_{C^0} > 1/2 \right)
+\mathbb{P} \left(\|u(0)-u_\eps(0)\|^2_{L^2} > \gamma\right)\\
&& +\mathbb{P} \left( \|Z_\eps^2-C_0\|^2_{L^p([0,T],H^{-1})}  > \gamma\right) 
+\mathbb{P} \left( \|Z_\eps\|^2_{L^p([0,T],H^{-1})}  > \gamma\right) 
\\&& +\mathbb{P} \left( \|Z_\eps^3\|^2_{L^2([0,T],H^{-1})}  > \gamma\right) 
+\mathbb{P} \left( \eps^{1/2}\|Z_\eps\|^6_{L^6([0,T],L^6)}  > \gamma\right) 
\end{eqnarray*}
\end{theorem}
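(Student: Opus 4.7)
The plan is to apply Theorem \ref{thm:main} in the CH/AC homotopy setting of the previous subsection, where Assumption \ref{ass:AF} was verified with $X = C^0$ and $c_\eps = \eps^{1/4}$. This already produces the first two probabilities on the right-hand side (the noise threshold $(2c_\eps)^{-1}$ translates into $\eps^{1/4}\sup\|Z_\eps\|_{C^0} > 1/2$). What remains is to bound $\mathbb{P}\bigl(\int_0^T \|\Res_\eps(u)\|^2_{V_\eps'}\,dt > \gamma\bigr)$ by the four random $Z_\eps$-events in the statement, using the standing hypothesis $\gamma \geq \eps^{1/2}$ to absorb any deterministic $\eps$-powers that appear.

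\textbf{Splitting the residual.} Following the decomposition in \eqref{def:resu} and $\|a+b+c\|^2 \leq 3(\|a\|^2+\|b\|^2+\|c\|^2)$, I would estimate the three summands of $\Res_\eps(u)$ separately in $V_\eps'$. The linear mismatch $(A-A_\eps) u$ is handled by the eigenfunction computation already carried out in the text, giving $\int_0^T \|(A-A_\eps)u\|^2_{V_\eps'}\,dt \leq C \eps \|u\|^2_{L^2([0,T],H^2)}$, a deterministic $O(\eps)$ piece. The operator mismatch $F(u+Z_\eps) - F_\eps(u+Z_\eps) = -\eps(1+\Delta)f(u+Z_\eps)$ is handled via the bound \eqref{eq:op-bound} together with $|f(w)| \lesssim 1 + |w|^3$ and the 2D Sobolev embedding $H^1 \hookrightarrow L^6$ (applied to the fixed function $u$), producing a deterministic $O(\eps^{1/2})$ piece plus $C\eps^{1/2}\|Z_\eps\|^6_{L^6([0,T],L^6)}$.

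\textbf{The renormalization piece, union bound, main obstacle.} Writing $G(u) - F(u+Z_\eps) = -Z_\eps + 3u^2 Z_\eps + 3 u(Z_\eps^2 - C_0) + Z_\eps^3$, I would use the continuous embedding $\|\cdot\|_{V_\eps'} \leq C \|\cdot\|_{H^{-1}}$ together with the product lemma from the text, the interpolation $\|u\|_{H^{1+s}} \leq \|u\|^{1-s}_{H^1}\|u\|^{s}_{H^2}$, and H\"older in time (with $p = 2/(1-2s)$), to dominate these pieces by $C \|Z_\eps\|^2_{L^p H^{-1}}$, $C \|Z_\eps^2 - C_0\|^2_{L^p H^{-1}}$, and $C \|Z_\eps^3\|^2_{L^2 H^{-1}}$, with constants depending only on the fixed norms of $u$ in $L^{\infty}([0,T],H^1)$ and $L^2([0,T],H^2)$. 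For $\gamma \geq \eps^{1/2}$ and $\eps_0$ sufficiently small, the deterministic $O(\eps)$ and $O(\eps^{1/2})$ contributions are already $\leq \gamma$, and a union bound distributes the remaining threshold over the four random events, with all numerical constants (including the $K$ from Theorem \ref{thm:main}) absorbed into $\tilde K$. The main obstacle, and the only place real care is required, is the interpolation and H\"older bookkeeping in the estimates for $u^2 Z_\eps$ and $u(Z_\eps^2 - C_0)$: one must choose $s \in (0,1/2)$ small so that $H^{1+s}$ embeds into the space required by the product lemma while simultaneously $p = 2/(1-2s)$ is no larger than the $L^p([0,T],H^{-1})$-moment one actually controls for $Z_\eps$ in the next section.
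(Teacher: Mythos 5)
Your proposal is correct and follows essentially the same route as the paper: apply the specialized version of Theorem \ref{thm:main} for the CH/AC-homotopy (with $X=C^0$, $c_\eps=\eps^{1/4}$) to get the first two events plus the residual probability, then split $\Res_\eps(u)$ into the three pieces of \eqref{def:resu}, bound $(A-A_\eps)u$ by $C\eps\|u\|^2_{L^2([0,T],H^2)}$ via the eigenfunction computation, bound $F-F_\eps$ via \eqref{eq:op-bound} into a deterministic piece plus $C\eps^{1/2}\|Z_\eps\|^6_{L^6L^6}$, and control $F(u+Z_\eps)-G(u)$ in $H^{-1}\hookrightarrow V_\eps'$ via the product lemma, interpolation, and H\"older in time, finishing with a union bound and absorption of the deterministic $O(\eps^{1/2})$ terms using $\gamma\geq\eps^{1/2}$. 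Your bookkeeping of $s$ and $p=2/(1-2s)$ matches what the paper leaves implicit in its ``$s$ sufficiently small, $p$ sufficiently large'' phrasing.
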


Thus, we have finally reduced the whole approximation result to a statement about the stochastic convolution.

\subsection{AC in 2D with higher order regularization}

Recall that for AC with higher order regularization, we have 
\[
A_\eps=-\eps^2\Delta^2+\Delta
\quad \text{and}\quad 
A=\Delta .
\]
Moreover the space $V_\eps'$ is an $H^{-2}$ space with an $\eps$-dependent equivalent norm 
\[
\|w\|^2_{V_\eps'} = \langle (1-\Delta+\eps^2\Delta^2)^{-1} w,w\rangle_{L^2}.
\]
Hence 
\[\|(A_\eps-A)u\|_{V_\eps'} = \eps^2 \|\Delta^2(1-\Delta+\eps^2\Delta^2)^{-1/2}u\|
\leq C \eps \|u\|^2_{H^2} 
\]
and we can proceed with the first term in the residual as before.
The second term vanishes, as $F=F_\eps$.

Moreover, for the third term we use that 
$\|w\|_{V_\eps'} \leq C \|w\|_{H^{-1}}$ and obtain exactly the same estimates as in the CH/AC-homotopy.

Thus we can prove the following.

\begin{theorem}
In the setting of AC with higher order regularization in 2D, let $u\in C^0([0,T],H^1)\cap L^2([0,T],H^2)$ be a solution to \eqref{e:limitAC} and fix $\epsilon_0>0$ sufficiently small. 
Then for  all $T>0$ and sufficiently large $p>2$ there is a constant $\tilde{K}>0$ such that for all $\eps\in(0,\eps_0)$ 
and  all $\gamma \geq \eps$ we have
\begin{eqnarray*}
\lefteqn{\mathbb{P} \left( \sup_{[0,T]}\|u_\eps-u-Z_\eps\|^2_{L_2} >  \tilde{K} \gamma\right)}\\
&\leq &
\mathbb{P} \left(\|u(0)-u_\eps(0)\|^2_{L^2} > \gamma\right) +\mathbb{P} \left( \|Z_\eps^2-C_0\|^2_{L^p([0,T],H^{-1})}  > \gamma\right) 
\\&& +\mathbb{P} \left( \|Z_\eps\|^2_{L^p([0,T],H^{-1})}  > \gamma\right) 
 +\mathbb{P} \left( \|Z_\eps^3\|^2_{L^2([0,T],H^{-1})}  > \gamma\right) 
\end{eqnarray*}
\end{theorem}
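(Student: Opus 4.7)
The plan is to apply the abstract main result Theorem \ref{thm:main} and then reduce each term in the residual to one of the quantities appearing on the right-hand side. Since for this example Assumption \ref{ass:AF} is verified with $c_\eps=0$, the term $\mathbb{P}(\sup_{[0,T]}\|Z_\eps\|_X > (2c_\eps)^{-1})$ from Theorem \ref{thm:main} drops out (using the convention $1/c_\eps=\infty$), so it suffices to show that we can absorb $\mathbb{P}(\int_0^T \|\Res_\eps(u)\|_{V_\eps'}^2\,dt > \gamma)$ into the events listed on the right-hand side, up to constants and the condition $\gamma\geq\eps$.

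Write out the residual using \eqref{def:resu}:
\begin{equation*}
\Res_\eps(u) = (A-A_\eps)u + \bigl[F(u+Z_\eps)-F_\eps(u+Z_\eps)\bigr] + \bigl[G(u)-F(u+Z_\eps)\bigr].
\end{equation*}
The middle bracket vanishes because $F_\eps=F$ for this example. For the first term, since $A-A_\eps = \eps^2\Delta^2$ and the $V_\eps'$-norm is the $H^{-2}$ norm weighted by $(1-\Delta+\eps^2\Delta^2)^{-1}$, the bound
\begin{equation*}
\|\eps^2\Delta^2 u\|_{V_\eps'} \leq C\eps \|u\|_{H^2}
\end{equation*}
holds by the spectral estimate already recorded in the text, so $\int_0^T \|(A-A_\eps)u\|_{V_\eps'}^2\,dt \leq C\eps^2 \|u\|_{L^2([0,T],H^2)}^2 \leq C\eps$ deterministically. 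This is controlled by the assumption $\gamma\geq\eps$. For the third bracket, with $G(u)=f(u)-3C_0 u$ and the standard cubic $f$, one computes
\begin{equation*}
G(u)-F(u+Z_\eps) = -Z_\eps + 3u^2 Z_\eps + 3u[Z_\eps^2-C_0] + Z_\eps^3.
\end{equation*}
Using the continuous embedding $H^{-1}\hookrightarrow V_\eps'$ (whose norm is $\eps$-independent), each of these four summands can be bounded exactly as in the CH/AC estimate: the pure $Z_\eps$ and $Z_\eps^3$ terms by their $L^p([0,T],H^{-1})$ and $L^2([0,T],H^{-1})$ norms directly; the mixed terms $u^2 Z_\eps$ and $u[Z_\eps^2-C_0]$ via the product lemma (product $v\cdot w$ in $H^{-1}$ for $w\in H^{1+s}$) combined with the interpolation $\|u\|_{H^{1+s}} \leq \|u\|_{H^1}^{1-s}\|u\|_{H^2}^{s}$ and H\"older in time, which yields for $s\in(0,1)$ small enough and $p>2$ large enough
\begin{equation*}
\int_0^T \|u^2 Z_\eps\|_{H^{-1}}^2 dt \leq C\|u\|_{L^\infty H^1}^{4(1-s)}\|u\|_{L^2 H^2}^{4s}\|Z_\eps\|_{L^p([0,T],H^{-1})}^2,
\end{equation*}
and analogously for $u[Z_\eps^2-C_0]$.

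Finally, I would collect these bounds: on the event that $\|u(0)-u_\eps(0)\|^2\leq \gamma$, $\|Z_\eps^2-C_0\|_{L^p H^{-1}}^2 \leq \gamma$, $\|Z_\eps\|_{L^p H^{-1}}^2 \leq \gamma$ and $\|Z_\eps^3\|_{L^2 H^{-1}}^2 \leq \gamma$, the residual integral $\int_0^T \|\Res_\eps(u)\|_{V_\eps'}^2\,dt$ is bounded by a fixed constant times $\gamma$ (using $\gamma\geq\eps$ to absorb the deterministic $\eps$-contribution), so Theorem \ref{thm:main} supplies a constant $\tilde K>0$ with $\sup_{[0,T]}\|u_\eps-u-Z_\eps\|_{L^2}^2 \leq \tilde K \gamma$ on this event. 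A union bound on the complement gives the stated inequality. The only genuinely non-routine step is the verification of the $V_\eps'$-bound on $(A-A_\eps)u$ with the correct $\eps$-scaling; once that spectral calculation is in hand, all remaining work is an essentially verbatim copy of the CH/AC argument, which is why the theorem is noted to be analogous to Theorem \ref{thm:ACCH} without the $\eps^{1/2}\|Z_\eps\|_{L^6 L^6}^6$ term (the middle bracket being zero) and without the $\|Z_\eps\|_{C^0}$ event (since $c_\eps=0$).
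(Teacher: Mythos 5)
Your proposal is correct and follows essentially the same route as the paper: apply Theorem \ref{thm:main} with $c_\eps=0$, note that the middle bracket of the residual vanishes since $F_\eps=F$, control $(A-A_\eps)u=\eps^2\Delta^2 u$ in $V_\eps'$ by the spectral bound $C\eps\|u\|_{H^2}$ (absorbed via $\gamma\geq\eps$), and estimate the remaining bracket through the $\eps$-uniform embedding $H^{-1}\hookrightarrow V_\eps'$ exactly as in the CH/AC case. No gaps; your write-up in fact makes explicit the union-bound step that the paper leaves implicit.
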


\subsection{AC in 2D with regularized noise}

In this case, $A=A_\eps$ and $F=F_\eps$. Thus the first two terms in the residual vanish. Moreover the third term is analogous to the previous two examples, and we recover the same result as in the previous section.

\section{Stochastic Convolution for the CH/AC-homotopy}

\label{sec:stochastic-convolution}

In order to finish the full approximation result, we need to establish various bounds on the stochastic convolution.
We shall briefly state the corresponding results on the flat torus only, i.e.\ for periodic boundary conditions. We assume moreover that the operator $Q_\eps$ is Fourier-diagonal and hence is equal to the operator square root of the covariance operator $Q_\eps^\ast Q_\eps=Q_\eps^2$.

For Neumann boundary conditions on a general domain in particular the quantification of the convergence for $Z_\eps^2$ seems to be an open problem.
So far, we are only aware of the result \cite{GH:19} with Neumann boundary conditions on the square.

On the two-dimensional torus, we  can rely on Fourier series. 
Assuming that $Q_\eps$ is also diagonal in Fourier space, 
we have 
\[
Z_\eps(t)
= \sum_{k\in\Z^2} \alpha_k(\eps) I_k^{(\eps)}(t) e_k 
\quad \text{with}\quad 
I_k^{(\eps)}(t)= \int_0^t e^{-(t-s)\lambda_k(\eps)}\, d\beta_k(s)
\]
for the Fourier basis $e_k$ and complex-valued Brownian motion $\beta_k$ such that $\overline{\beta_k} = \beta_{-k}$.
Moreover, $Q_\eps e_k= \alpha_k(\epsilon) e_k$ and
for our differential operators
$A_\eps e_k= -\lambda_k(\epsilon) e_k$ with $0\leq\lambda_k(\eps)\to\infty$ for $|k|\to\infty$.
We call the $k$th coefficient in the series the $k$th Fourier mode of $Z_\eps$.

\begin{remark}
In contrast to the existing literature our stochastic convolution 
$Z_\eps$ is not stationary. We chose to perform the non-stationary transformation 
to the random PDE \eqref{e:trafo} in order to have the initial condition unchanged. 
But that is not a major issue, and most estimates work exactly the same way. 
Only in a few occasions extra effort is needed.
\end{remark} 

In the example of the CH/AC-homotopy with small space-time white noise, we have  
\[
\lambda_k(\eps) = (1-\eps-\eps|k|^2)|k|^2 
\quad\text{and}\quad 
\alpha_k(\eps)=\sigma_\eps\to 0.
\]
We fix this notation in the sequel so that for the CH/AC-homotopy, we can shall prove the following result related to Theorem \ref{thm:ACCH}.

\begin{prop}
In the setting of the  CH/AC-homotopy in 2D, let $u\in C^0([0,T],H^1)\cap L^2([0,T],H^2)$ be a solution to \eqref{e:limitAC} and fix $\epsilon_0>0$ sufficiently small. 
Then for all $T>0$ and sufficiently large $p>2$, and for all $\gamma \geq \eps^{1/2}$, we have that
\begin{align*}
\lim_{\eps\searrow 0}&\Bigg[\mathbb{P} \left(\eps^{1/4}\sup_{[0,T]}  \|Z_\eps \|_{C^0} > 1/2 \right) +\mathbb{P} \left( \|Z_\eps\|^2_{L^p([0,T],H^{-1})}  > \gamma\right) \\
 &+\mathbb{P} \left( \|Z_\eps^3\|^2_{L^2([0,T],H^{-1})}  > \gamma\right) +\mathbb{P} \left( \eps^{1/2}\|Z_\eps\|^6_{L^6([0,T],L^6)}  > \gamma\right)\Bigg]=0.
\end{align*}
\end{prop}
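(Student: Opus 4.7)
My plan is to bound each of the four probabilities via Markov's inequality, reducing the problem to moment estimates on $Z_\eps$ in the appropriate norm, and then evaluate these moments using the Fourier-diagonal structure $Z_\eps=\sigma_\eps\sum_k I_k^{(\eps)}(t)e_k$ together with the It\^o isometry and Gaussian hypercontractivity. Throughout I shall use that $\lambda_k(\eps)\geq\tfrac12|k|^2$ uniformly in small $\eps$ (with $\lambda_k(\eps)\sim\eps|k|^4$ for high modes), and that the prescription $\sigma_\eps^2\sum_k(2\lambda_k(\eps))^{-1}\to C_0$ in the definition of $C_0$ forces $\sigma_\eps\to 0$ at the precise rate $\sigma_\eps^2\sim 1/\log(1/\eps)$.

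For the three first-chaos-type terms I would proceed as follows. For the $L^p([0,T],H^{-1})$ bound, the It\^o isometry yields $\mathbb{E}\|Z_\eps(t)\|_{H^{-1}}^2\leq\sigma_\eps^2\sum_k[(1+|k|^2)\cdot 2\lambda_k(\eps)]^{-1}\leq C\sigma_\eps^2$, the spectral sum being finite in 2D because $\sum_{k\neq 0}|k|^{-2}(1+|k|^2)^{-1}<\infty$; hypercontractivity then upgrades the second moment to the $p$-th moment, and Fubini plus Markov close the argument. The $L^6([0,T],L^6)$ term uses the Gaussian identity $\mathbb{E}\|Z_\eps(t)\|_{L^6}^6=15\int\mathbb{E}[Z_\eps(t,x)^2]^3\,dx\leq C$, uniformly in $\eps$, because the pointwise variance converges to $C_0$; the $\eps^{1/2}$ prefactor inside the probability then yields the claim. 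For $\sup_{[0,T]}\|Z_\eps\|_{C^0}$, I would use a Kolmogorov-type continuity / Dudley-entropy argument: since the pointwise variance is $O(1)$, Gaussian concentration yields $\mathbb{E}\sup_{[0,T]\times U}|Z_\eps|^q\leq C_q(\log(1/\eps))^{q/2}$ for any $q$, so Markov bounds the first probability by $C\,\eps^{q/4}(\log(1/\eps))^{q/2}\to 0$ for sufficiently large $q$.

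The main technical step is the cube $\|Z_\eps^3\|_{L^2([0,T],H^{-1})}$. I would use the Wick decomposition $Z_\eps^3={:}Z_\eps^3{:}+3c_\eps Z_\eps$, where $c_\eps(t,x)=\mathbb{E}[Z_\eps(t,x)^2]$ is uniformly bounded; the linear correction $c_\eps Z_\eps$ inherits the first-chaos smallness via the second paragraph, while for the Wick cube a direct Fourier expansion gives
\[
\mathbb{E}\|{:}Z_\eps^3{:}\|_{H^{-1}}^2\leq 6\sigma_\eps^6\sum_k(1+|k|^2)^{-1}\sum_{k_1+k_2+k_3=k}\prod_{j=1}^{3}(2\lambda_{k_j}(\eps))^{-1}.
\]
A power-counting estimate in the spirit of Da Prato--Debussche shows the triple spectral sum grows at most poly-logarithmically in $\eps^{-1}$, so the prefactor $\sigma_\eps^6=O(\log^{-3}(1/\eps))$ drives the whole expression to zero; hypercontractivity (between the second and higher moments in a fixed Wiener chaos) and Markov then complete the argument. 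The hard part of the whole proposition is precisely this Feynman-diagram-type power-counting with the $\eps$-dependent UV regularization $\lambda_k(\eps)\sim\eps|k|^4$; the other three bounds reduce to essentially one-line variance computations once hypercontractivity and the Gaussian supremum inequalities are in place. A minor non-stationarity issue is the factor $1-\exp(-2t\lambda_k(\eps))\leq 1$ appearing in each spectral sum, which only improves matters.
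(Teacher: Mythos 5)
Your overall strategy coincides with the paper's: Chebyshev/Markov reduces everything to moment bounds, which are then computed mode by mode using the Fourier-diagonal structure, the It\^o isometry, and Gaussian hypercontractivity (the paper invokes Nelson's theorem for the latter), with the logarithmic divergence of $\sum_k \lambda_k(\eps)^{-1}$ beaten by $\sigma_\eps^2\sim 1/\log(\eps^{-1})$. The differences are cosmetic: for the sup bound the paper goes through the Sobolev embedding $W^{\delta,p}\hookrightarrow C^0$ plus the factorization method (giving the cruder but sufficient bound $\E\sup_{[0,T]}\|Z_\eps\|_{C^0}^p\lesssim \sigma_\eps^p\eps^{-p\delta/4}$) where you use chaining/Dudley, and for the cube the paper merely asserts that a ``similar Fourier-series expansion'' works, whereas you make the Wick decomposition $Z_\eps^3={:}Z_\eps^3{:}+3c_\eps Z_\eps$ explicit, which is a cleaner organization of the same computation.

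There is, however, one genuine soft spot, and it sits exactly at the step you identify as the hard part. Your stated justification for the Wick cube --- ``the triple spectral sum grows at most poly-logarithmically in $\eps^{-1}$, so the prefactor $\sigma_\eps^6=O(\log^{-3}(1/\eps))$ drives the whole expression to zero'' --- is a non sequitur: if the triple sum grows like $\log^3(\eps^{-1})$ or faster, the product does \emph{not} vanish. And this is not a hypothetical worry: dropping the weight $(1+|k|^2)^{-1}$ entirely, the triple sum equals $\bigl(\sum_{j\neq 0}(2\lambda_j(\eps))^{-1}\bigr)^3\sim\log^3(\eps^{-1})$, so $\E\|{:}Z_\eps^3{:}\|_{L^2}^2$ stays of order one and does \emph{not} tend to zero. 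The $H^{-1}$ weight is therefore doing real work, and your power counting must engage with it. The correct observation is that since $\lambda_j(\eps)\ge(1-\eps)|j|^2$, one has the $\eps$-uniform convolution bounds $\sum_{j}\lambda_j^{-1}\lambda_{m-j}^{-1}\lesssim \log(2+|m|)\,(1+|m|^2)^{-1}$ and hence
\[
\sum_{k}(1+|k|^2)^{-1}\sum_{k_1+k_2+k_3=k}\prod_{j=1}^{3}\lambda_{k_j}(\eps)^{-1}\;\lesssim\;\sum_k\frac{\log^2(2+|k|)}{(1+|k|^2)^2}\;<\;\infty
\]
uniformly in $\eps$, so that $\E\|{:}Z_\eps^3{:}\|_{H^{-1}}^2\lesssim\sigma_\eps^6\to 0$. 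This is precisely the mechanism behind the $\eps$-independent series $\sum_{k\neq0}|k|^{-2}\sum_\ell(\lambda_\ell\lambda_{k-\ell})^{-1}$ that the paper singles out for the $Z_\eps^2$ term. With that bound inserted, your argument closes; as written, the claimed power counting would not suffice.
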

\begin{proof}
Firstly, it is well-known using the Kolmogorov continuity theorem or the Sobolev embedding of $W^{\alpha,p}$ into $C^0$ that for all $\delta>0$ and $p>1$ that there is a constant $C>0$ such that 
\[
\mathbb{E}\|Z_\eps(t)\|^p_{C^0} \leq C \sigma_\eps^p \left( 1+ \sum_{k\not=0} \frac{|k|^\delta}{\lambda_k(\eps)}  \right)^{p/2}.
\]

We can extend this result, using for example the celebrated factorization method, to obtain that
for all $\delta>0$, $T>0$ and $p>1$ there is a constant $C>0$ such that 
\[
\mathbb{E}\left[\sup_{[0,T]}\|Z_\eps\|^p_{C^0}\right] \leq C \sigma_\eps^p \left( 1+ \sum_{k\not=0} \frac{|k|^\delta}{\lambda_k(\eps)} \right)^{p/2}.
\]   
Thus, we obtain by Chebyshev's inequality
\begin{eqnarray*}
\mathbb{P} \left(\eps^{1/4}\sup_{[0,T]}  \|Z_\eps \|_{C^0} > 1/2 \right)
&\leq& \left( 2\eps^{1/4} \sigma_\eps\right)^{p} \E \left[\sup_{[0,T]}  \|Z_\eps \|_{C^0}^p\right] \\
& \leq& C \eps^{p/4} \sigma_\eps \left( 1+ \sum_{k\not=0} \frac{|k|^\delta}{\lambda_k(\eps)} \right)^{p/2} \\
&\to& 0 \quad \text{for}\quad \eps\to 0.
\end{eqnarray*}
For the convergence above, we can simply evaluate the series by a splitting into $|k|\leq \eps^{-1/2}$ and $|k|> \eps^{-1/2}$ and using integral comparison theorems to obtain
\[
\sum_{k\not=0} \frac{|k|^\delta}{\lambda_k(\eps)} \sim \eps^{\delta/2}
\quad\text{ for }\delta >0 
\qquad\text{and} \qquad  
\sum_{k\not=0} \frac{1}{\lambda_k(\eps)} \sim \log(\eps^{-1}).
\]
We see that, as expected, we are in the critical case in two spatial dimensions, 
and the bound for the $C^0$-norm of $Z_\eps$ diverges slowly for $\eps \to 0$. 

In a similar way, one can show for $\eps\to 0$ that
\[
\mathbb{P} \left( \eps^{1/2}\|Z_\eps\|^6_{L^6([0,T],L^6)}  > \gamma\right)
\to 0
\]
and 
\[
\mathbb{P} \left( \|Z_\eps\|^2_{L^p([0,T],H^{-1})}  > \gamma\right) 
\to 0
\]
where, in the last integral, one relies on an additional factor $1/|k|^2$ appearing due to the $H^{-1}$-norm so that
\[
\E \|Z_\eps\|^2_{H^{-1}} 
\leq C \sigma_\eps^2 \left( 1+ \sum_{k\not=0} \frac{|k|^{-2}}{\lambda_k(\eps)} \right) 
\leq  C \sigma_\eps^2 \left( 1+ \sum_{k\not=0} \frac1{|k|^{4}} \right)
\]
and we can use Gaussianity for higher order moments to yield the result.
\end{proof}

\begin{remark}
In all three cases above and in the two cases below, we could quantify the convergence in more detail and could even make $\gamma$ small and $\eps$-dependent. This would not only give us convergence in probability, but also a rate of convergence.
But for the sake of simplicity of presentation, we are not entering this discussion.
\end{remark}
One can also expand $Z_\eps^2$ in terms of Fourier-modes
\[
Z_\eps^2(t) = \sum_{k\in\Z^2} \sum_{\ell\in\Z^2} I_{k-\ell}^{(\eps)}(t)I_\ell^{(\eps)}(t) e_k
\]
From the literature it is well-known that for convergence of the type
\[
\E\|Z_\eps^2 - C_\eps\|^2_{L^2([0,T],H^{-1})} \to 0 
\]
we need 
\begin{equation}
    \label{def:Ceps}
    C_\eps =\sum_{k\not=0} \frac{\sigma_\eps^2}{2\lambda_k(\eps)}   
\end{equation}
in order to handle the divergent constant terms. 
This divergent term matches the zeroth Fourier mode of $Z_\eps^2$. All other Fourier modes of $Z_\eps^2$ remain finite in the limit $\eps\to0$. 
This is folklore, see for example \cite{GIP:15,DPD02,BR:13}.

Note that we can always neglect the constant mode $k=0$ in $Z_\eps$. 
The stochastic convolution is not stationary in our case, as $\lambda_0(\eps)=0$, but due to $\sigma_\eps\to0$ every fixed Fourier mode of $Z_\eps$ 
disappears in $Z_\eps^2$ in the limit $\eps\to0$. Moreover, equality in \eqref{def:Ceps} is only needed asymptotically for $\eps\to0$ thus we could neglect any finite number of terms in the sum.

We thus define our renormalizing constant to be 
\begin{equation}
\label{def:C0}
C_0 = \lim_{\eps\to0} \sigma_\eps^2 \sum_{k\not=0} \frac{1}{2\lambda_k(\eps)}.
\end{equation}
\begin{remark}
By the discussion above, we need $\sigma(\eps) \sim 1/\log(\eps^{-1})$ in order to have a nontrivial limit.

Moreover, if $\sigma(\eps) \ll 1/\log(\eps^{-1})$ we have $C_0=0$. Our main result still applies, but this is an uninteresting case, as we would just prove that sufficiently small noise disappears in the limit. 

On the other hand, if $\sigma(\eps) \gg 1/\log(\eps^{-1})$, we are in the regime of triviality observed in \cite{HRW:12}. As the constant $C_\eps$ diverges, the limiting equation heuristically has an arbitrarily strong linear damping, and thus $u\equiv 0$ in the end and $u_\eps \approx Z_\eps \to 0$ in $H^{-1}$, which implies triviality. Nevertheless, $u_\eps$ explodes in $L^2$.
\end{remark}

It is now a lengthy and tedious, but relatively straightforward computation to show that 
\begin{equation}\label{eq:Zsquare}  \E\|Z_\eps^2-C_0 \|^2_{L^2([0,T],H^{-1})} \to 0.
\end{equation}
The key in this computation is that $C_0$ together with the small noise strength controls the divergent zeroth Fourier mode in $Z_\eps^2$. 
An expansion of $Z_\eps$ in terms of Fourier series would yield a threefold sum for the norm, where one can rely on cancellations in expectation due to independence of the Fourier modes of $Z_\eps$. 

In the end, we crucially rely on the $\eps$-independent bound of the series 
\[ 
\sum_{k\not=0} \frac1{|k|^2} \sum_{\ell\in\Z^2\setminus\{0,k\}} \frac1{\lambda_\ell\lambda_{k-\ell}}.
\]
All these computations are well-known in case when $Z_\eps$ is a stationary process, but they work exactly the same way here.  
Due to the non-stationarity of $Z_\eps$ additional exponential terms appear in the computations that are usually just discarded via estimates. 
Moreover, the additional time integral in the $L^2([0,T],H^{-1})$-norm helps, as the integral over the exponential terms generate an additional factor $1/\lambda_k(\eps)$ that enhances the convergence of the series.

Let us finally remark that due to Nelson's theorem, see for example \cite[Theorem I.22]{S:74}, we can always bound higher order moments by powers of the second moment which we just discussed.

In a similar way using Fourier-series expansion we  can establish
\[  \E\|Z_\eps^3\|^p_{L^p([0,T],H^{-1})}     \to 0
\] 
but we refrain from giving details of this lengthy calculation, as we are not interested in the precise asymptotics.

Our final result is:
\begin{theorem}
For the CH/AC-homotopy with periodic boundary conditions on a two dimensional domain perturbed by space-time white noise of strength 
$$\sigma_\eps \sim \frac{1}{\log(\eps^{-1})},$$
supposing that  
$\|u_{\eps}(0)- u(0)\|_{L^2}\to 0 $
in probability, we obtain that
\[\|u_{\eps}- u-Z_\eps\|_{L^\infty([0,T],L^2)}\to 0
\quad\text{
in probability,}
\]
where $u\in L^\infty([0,T],H^1)\cap L^2([0,T],H^2)$ is a solution to 
\[\partial_t u =\Delta u +u-u^3 - 3 C_0 u
\]
with $C_0$ defined as in \eqref{def:C0}.
\end{theorem}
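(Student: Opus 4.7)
The plan is to combine Theorem \ref{thm:ACCH}, which has already reduced the full approximation result to control of the stochastic convolution $Z_\eps$, with the Fourier-series moment estimates assembled in Section \ref{sec:stochastic-convolution}. Given $\eta>0$, I set $\gamma=\eta/\tilde K$ with $\tilde K$ from Theorem \ref{thm:ACCH}; for $\eps$ small enough $\gamma\geq \eps^{1/2}$, so Theorem \ref{thm:ACCH} applies and the probability $\P(\sup_{[0,T]}\|u_\eps-u-Z_\eps\|^2_{L^2}>\eta)$ is dominated by a sum of six probabilities. A union bound then reduces matters to showing that each of the six tends to zero as $\eps\to 0$.

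First I would verify that $\sigma_\eps\sim 1/\log(\eps^{-1})$ is indeed the correct scaling: with $\lambda_k(\eps)=(1-\eps-\eps|k|^2)|k|^2$, splitting the sum $\sum_{k\neq 0}1/\lambda_k(\eps)$ at $|k|\sim\eps^{-1/2}$ and comparing with integrals shows it grows like $\log(\eps^{-1})$, so $C_0$ in \eqref{def:C0} is a finite positive constant. This makes the limiting PDE $\partial_tu=\Delta u+u-u^3-3C_0u$ a standard Allen-Cahn equation with an extra linear sink, which is well posed in $L^\infty([0,T],H^1)\cap L^2([0,T],H^2)$ for $H^1$-initial data by classical parabolic theory.

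Next I would dispose of the six probabilities one by one. The initial-condition term $\P(\|u_\eps(0)-u(0)\|^2_{L^2}>\gamma)$ vanishes by hypothesis. The three bounds involving $\sup_{[0,T]}\|Z_\eps\|_{C^0}$, $\|Z_\eps\|_{L^p_tH^{-1}}$, and $\|Z_\eps\|_{L^6_tL^6}$ are exactly the content of the Proposition of Section \ref{sec:stochastic-convolution}, so Chebyshev's inequality applied to the moment estimates established there yields their convergence to zero. The $\|Z_\eps^3\|^2_{L^2_tH^{-1}}$ term is handled by the Fourier-series computation sketched just after \eqref{eq:Zsquare}, again via Chebyshev.

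The main obstacle is the remaining term $\P(\|Z_\eps^2-C_0\|^2_{L^p([0,T],H^{-1})}>\gamma)$, since \eqref{eq:Zsquare} delivers only the $L^2$-in-time version in quadratic mean, whereas $p>2$ is required. The plan is to first establish the pointwise-in-time bound $\sup_{t\in[0,T]}\E\|Z_\eps^2(t)-C_0\|^2_{H^{-1}}\to 0$ by the same Fourier-series manipulation as for \eqref{eq:Zsquare} (the non-stationarity adds only exponential factors which are trivially bounded at each fixed $t$, and the mean $\E Z_\eps^2(t)$ is an $x$-constant that converges to $C_0$ as $\eps\to 0$ uniformly in $t\in[\tau,T]$ for any $\tau>0$, with the small-$t$ discrepancy absorbed into the time integral). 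Then I would upgrade to the $q$th moment for any $q\geq 2$ via Nelson's hypercontractive estimate, available since $Z_\eps^2(t)-\E Z_\eps^2(t)$ lies in the second Wiener chaos. Fubini-Tonelli applied to the time integral then yields $\E\|Z_\eps^2-C_0\|^p_{L^p_tH^{-1}}\to 0$, and Chebyshev gives the required probability bound. The union bound concludes the argument.
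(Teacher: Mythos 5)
Your proposal follows essentially the same route as the paper: the final theorem is obtained by assembling Theorem \ref{thm:ACCH} with the stochastic-convolution estimates of Section \ref{sec:stochastic-convolution} (the Proposition for the $C^0$, $L^p_tH^{-1}$, $L^6_tL^6$ and $Z_\eps^3$ terms, the Fourier computation behind \eqref{eq:Zsquare} together with Nelson's hypercontractivity for the $Z_\eps^2-C_0$ term), exactly as you describe. Your extra care in upgrading \eqref{eq:Zsquare} from $L^2$ to $L^p$ in time (pointwise-in-time second-moment bounds away from $t=0$, Nelson for higher moments, Fubini) is precisely what the paper's closing remark on Nelson's theorem is meant to cover, so this is a faithful reconstruction rather than a different argument.
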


We emphasize that for $\sigma_\eps \ll 1/\log(\eps^{-1})$ the result still holds, but in this case with $C_0=0$.

Furthermore, the result is formulated in a weak sense. One
could optimize it and would actually obtain a rate of convergence if the various moments discussed above were computed in more detail.  

Finally, recall that in the case of the theorem above the $L^2$-norm of $Z_\eps$ remains bounded in the limit $\eps\to0$. In the case where $\sigma_\eps\gg\log(\eps^{-1})$, the  $L^2$-norm of $Z_\eps$ diverges to infinity and we are in the case of triviality, where the result above will hold for $u=0$,
i.e. $u_\eps\approx Z_\eps$ diverges in $L^2$, but converges to $0$ in $H^{-1}$.

\end{document}